\numberwithin{equation}{section}
\numberwithin{figure}{section}
\theoremstyle{plain}
\newtheorem{thm}{\protect\theoremname}
  \theoremstyle{definition}
  \newtheorem{defn}[thm]{\protect\definitionname}
  \theoremstyle{plain}
  \newtheorem{prop}[thm]{\protect\propositionname}
  \theoremstyle{plain}
  \newtheorem{cor}[thm]{\protect\corollaryname}
  \theoremstyle{plain}
  \newtheorem{lem}[thm]{\protect\lemmaname}
  \theoremstyle{remark}
  \newtheorem{rem}[thm]{\protect\remarkname}
  \theoremstyle{definition}
  \newtheorem{example}[thm]{\protect\examplename}
\newcommand{\blank}{-}
\newcommand{\ra}{\to}
\newcommand{\N}{\ensuremath{\mathbb{N}}}
\newcommand{\Z}{\ensuremath{\mathbb{Z}}}
\newcommand{\R}{\ensuremath{\mathbb{R}}}
\newcommand{\sgn}{\text{sgn}}
\newcommand{\im}{\text{Im}}
\newcommand{\supp}{\text{Supp}}
\newcommand{\FU}{{^{\bullet} U}}
\newcommand{\FV}{{^{\bullet} V}}
\newcommand{\FW}{{^{\bullet} W}}
\newcommand{\FR}{{^{\bullet} \R}}
\newcommand{\Ff}{{^{\bullet} f}}
\newcommand{\Fh}{{^{\bullet} h}}
\newcommand{\Falpha}{{^{\bullet} \alpha}}
\newcommand{\Fbeta}{{^{\bullet} \beta}}
\newcommand{\stx}{{^{\circ}x}}
\newcommand{\sty}{{^{\circ}y}}
\newcommand{\stz}{{^{\circ}z}}
\newcommand{\stp}{{^{\circ}p}}
\newcommand{\sta}{{^{\circ}a}}
\newcommand{\stb}{{^{\circ}b}}
\newcommand{\stv}{{^{\circ}v}}
  \providecommand{\corollaryname}{Corollary}
  \providecommand{\definitionname}{Definition}
  \providecommand{\examplename}{Example}
  \providecommand{\lemmaname}{Lemma}
  \providecommand{\propositionname}{Proposition}
  \providecommand{\remarkname}{Remark}
\providecommand{\theoremname}{Theorem}
\begin{document}

\title{Convergences and the Intermediate Value Property in Fermat Reals}

\author{Enxin Wu}

\thanks{E..~Wu has been supported by grant P25311-N25 of the Austrian Science
Fund FWF}

\address{\textsc{University of Vienna, Austria}}

\email{enxin.wu@univie.ac.at}

\subjclass[2000]{54H99, 26E30}

\keywords{Fermat reals, convergence, topology, Lebesgue dominated convergence,
intermediate value property.}
\begin{abstract}
This paper contains two topics of Fermat reals, as suggested by the
title. In the first part, we study the $\omega$-topology, the order
topology and the Euclidean topology on Fermat reals, and their convergence
properties, with emphasis on the relationship with the convergence
of sequences of ordinary smooth functions. We show that the Euclidean
topology is best for this relationship with respect to pointwise convergence,
and Lebesgue dominated convergence does not hold, among all additive
Hausdorff topologies on Fermat reals. In the second part, we study
the intermediate value property of quasi-standard smooth functions
on Fermat reals, together with some easy applications. The paper is
written in the language of Fermat reals, and the idea could be extended
to other similar situations.
\end{abstract}

\maketitle
\tableofcontents{}

\section{Introduction}

The idea of using infinitesimals in geometry and analysis, even from
its birth, was on the one hand very intuitive and computable, and
hence led to great development of mathematics and physics, and on
the other hand very controversial for its rigor. It was A.-L. Cauchy
who made the definition of limit rigorous using the epsilon-delta
language. Since then, infinitesimals gradually left the main stream
of mathematics, but its idea was still used while doing research.
The renaissance of infinitesimals happened when they were made rigorous,
together with many applications in other fields of mathematics (see
for example Non-Standard Analysis \cite{R} and Synthetic Differential
Geometry \cite{K}). 

Among all the existing infinitesimal theories, the theory of Fermat
reals introduced by P. Giordano in~\cite{G2} has the properties
that the theory is compatible with classical logic, all infinitesimals
are nilpotent, and the ring $\FR$ of Fermat reals is well-ordered.
The whole theory is a mixture of algebra and analysis: the model of
infinitesimals are polynomial-like function (called little-oh polynomials)
modulo certain degree, the functions (called quasi-standard smooth
functions) are locally extensions of ordinary smooth functions with
parameters, and the calculations are given by Taylor's expansion at
standard point together with the nilpotency of infinitesimals (and
hence a finite sum); see Section~\ref{sec:Basics} for a quick review
of the basics of Fermat reals. 

In the current paper, we continue developing calculus of Fermat reals
(see \cite{GW} for the integral calculus). More precisely, we study
two questions: (1) Does Lebesgue dominated convergence hold in Fermat
reals? (2) Does every quasi-standard smooth function (of one variable)
has the intermediate value property? 

To settle the first question, we first study three natural topologies
on Fermat reals (the $\omega$-topology, the order topology and the
Euclidean topology) and their properties of convergence (pointwise
and uniform), with emphasis on the relationship with the convergence
of ordinary smooth functions. Then we show that the Euclidean topology
is best for pointwise convergence (Theorem~\ref{thm:best}), and
by a similar method that the Lebesgue dominated convergence does not
hold (Theorem~\ref{thm:Lebesgue}), for any additive Hausdorff topologies
on the Fermat reals.

For the second question, the general answer is no (Remark~\ref{rem:slice image}
\eqref{enu:no intermediate value}). So the real interesting question
is, which quasi-standard smooth functions have the intermediate value
property. We study this in depth from simple to general, together
with (counter-)examples and some applications. We show in Corollary~\ref{cor:intermediate for Fermat extension}
that the extension (without parameter) of ordinary smooth functions
with no flat point have the intermediate value property, and the general
case is solved in Subsection~\ref{sub:IV-for-quasi} (especially
Proposition~\ref{prop:intermediate}) by a similar method. The proof
of Corollary~\ref{cor:intermediate for Fermat extension} contains
three ingredients: the slice image theorem (Theorem~\ref{thm:slice image theorem}),
the slice monotonicity (Observation~\eqref{enu:slice-monotonicity}
in Subsection~\ref{sub:Monotonicity}) and some real analysis (the
proof of Corollary~\ref{cor:intermediate for Fermat extension}).
And the slice image theorem (Theorem~\ref{thm:slice image theorem})
is indeed an algorithm, whose finite termination is proved with the
usage of an unusual method (a mixture of real and symbolic computations). 

Although this paper is written in the language of Fermat reals, many
examples and some methods of proof can be applied to other similar
situations.

\medskip{}

I would like to thank P. Giordano for raising the question of Lebesgue
dominated convergence in the collaboration of \cite{GW} together
with some comments on the first draft of Section~\ref{sec:IVP},
and also to G. Sinnamon for providing Example~\ref{exa:complicatedegs}
and some discussion of Example~\ref{exa:flat-ivt}.

\section{\label{sec:Basics}Basics on Fermat reals}

Fermat reals were introduced by P.~Giordano in~\cite{G1,G2,G3,GK}.
Let us review the basic theory here; see these references for detailed
proof of these results.

Let $U$ be an open subset of $\R^{n}$. We define $U_{0}[t]$, the
\emph{little-oh polynomials} on $U$, to be the set of functions $x:[0,\epsilon)\ra U$
for some (not fixed) $\epsilon\in\R_{>0}$ with the property that
\[
\|x(t)-r-\sum_{i=1}^{k}\alpha_{i}t^{a_{i}}\|=o(t)\quad\text{i.e.,}\quad\lim_{t\ra0+}\frac{\|x(t)-r-\sum_{i=1}^{k}\alpha_{i}t^{a_{i}}\|}{t}=0
\]
for some $r\in U$, $k\in\N$, $\alpha_{i}\in\R^{n}$ and $a_{i}>0$.
Two little-oh polynomials $x$ and $y$ are called equivalent if $x(0)=y(0)$
and $x(t)-y(t)=o(t)$. This is an equivalence relation on $U_{0}[t]$,
and the quotient set is denoted by $\FU$. As a consequence, every
element in $\FU$ has a unique representing little-oh polynomial of
the form 
\begin{equation}
y(t)=\sty+\sum_{i=1}^{l}\beta_{i}t^{b_{i}}\label{eq:decomposition}
\end{equation}
 for some $\sty(:=y(0))\in U$, $l\in\N$, $\beta_{i}\in(\R^{n}\setminus\{0\})$
and $0<b_{1}<b_{2}<\cdots<b_{l}\leq1$, defined on $[0,\delta)$ for
some maximum $\delta\in\R_{>0}\cup\{\infty\}$. We call this the \emph{decomposition}
of the element $[y]$, $\sty$ the \emph{standard part}, and we define
$\omega([y]):=\frac{1}{b_{1}}$ the \emph{order} of $[y]$. For convenience,
we sometimes use a similar form of $y(t)$ as \eqref{eq:decomposition}
but allowing $\beta_{i}=0$, and we call such a form a \emph{quasi-decomposition}
of $[y]$. From now on, we write elements in $\FU$ by $y$ instead
of $[y]$ whenever there is no confusion.

Given a finite set of open subsets $\{U_{i}\}_{i\in I}$ of Euclidean
spaces, $^{\bullet}(\prod_{i\in I}U_{i})$ naturally bijects $\prod_{i\in I}\FU_{i}$.
Therefore, we do not distinguish $^{\bullet}(\R^{n})$ and $(\FR)^{n}$,
and write it as $\FR^{n}$. We can also identify $\FU$ as a subset
of $\FR^{n}$ by $\FU=\{x\in\FR^{n}\mid\stx\in U\}$ when $U$ is
an open subset of $\R^{n}$.

There is a canonical injective map $i_{U}:U\ra\FU$ defined by $i_{U}(u)(t)=u$.
So $\FU$ is an extension of $U$, and for $x\in\FU$, we call $\delta x:=x-\stx$
the \emph{infinitesimal part} of $x$. The meaning is clear when $U=\R$:
we can give a well ordering on $\FR$%
\footnote{ It is a commutative unital ring under pointwise addition and pointwise
multiplication, called the ring of \emph{Fermat reals.}%
} by $x\leq y$ if $x=\stx+\sum_{i=1}^{n}\alpha_{i}t^{a_{i}}$ and
$y=\sty+\sum_{i=1}^{n}\beta_{i}t^{a_{i}}$, both in the quasi-standard
form, with $(\stx,\alpha_{1,}\ldots,\alpha_{n})\leq(\sty,\beta_{1},\ldots,\beta_{n})$
in the dictionary order, and then $D_{\infty}:=\{x\in\FR\mid\stx=0\}=\{x\in\FR\mid-r<x<r\text{ for all }r\in\R_{>0}\}$.
Moreover, every infinitesimal part $\delta x$ of $x\in\FU$ is nilpotent,
i.e., there exists some $m=m(x)\in\N$ such that $(\delta x)^{m}=0$.

Using this ordering, we can define intervals on $\FR$, e.g. $(0,1):=\{x\in\FR\mid0<x<1\}$.
Instead, the usual intervals on $\R$ will be denoted, e.g. $(0,1)_{\R}=(0,1)\cap\R$.

On $\FR^{n}$, define $\tau:=\{\FU\mid U\text{ is an open subset of }\R^{n}\}$.
Then $\tau$ is a topology on $\FR^{n}$, called the \emph{Fermat
topology}, since $^{\bullet}(U\cap V)=\FU\cap\FV$ and $^{\bullet}(\cup_{i}U_{i})=\cup_{i}\FU_{i}$.
Without specification, for every subset $A$ of $\FR^{n}$, we always
equip it with the sub-topology of the Fermat topology of $\FR^{n}$.

Let $f:U\ra V$ be a smooth map between open subsets of Euclidean
spaces. Then $\Ff:\FU\ra\FV$ by $\Ff(x)=f\circ x$ is a well-defined
map extending $f$ (called the \emph{Fermat extension of $f$}), i.e.,
$\Ff(u)=f(u)$ whenever $u\in U$. The calculation of $\Ff(x)=\Ff(\stx+\delta x)$
can be done by Taylor's expansion of $f$ at the point $\stx$, using
the nilpotency of $\delta x$. More precisely, if the $(m+1)^{th}$
power of each component of $\delta x$ is $0$ for some $m\in\N$,
then we have 
\[
\Ff(x)=\Ff(\stx+\delta x)=\sum_{i\in\N^{m},|i|\leq m}\frac{1}{i!}\frac{\partial^{|i|}f}{\partial x^{i}}(\stx)\cdot(\delta x)^{i}.
\]
Therefore, for any open subset $W$ of $V$, we have $^{\bullet}(f^{-1}(W))=(\Ff)^{-1}(\FW)$,
i.e., $\Ff$ is continuous with respect to the Fermat topology.

Note that when $U\neq\emptyset$ and $\dim(V)>0$, not every constant
map $\FU\ra\FV$ is of the form $\Ff$ for some smooth map $f:U\ra V$,
since otherwise $\Ff(u)\in V\subset\FV$ for every $u\in U\subseteq\FU$.
We introduce the following definition:
\begin{defn}
Let $A\subseteq\FR^{n}$ and $B\subseteq\FR^{m}$ be arbitrary subsets.
A function $f:A\ra B$ is called \emph{quasi-standard smooth} if for
every $a\in A$, there exists an open neighborhood $U$ of $^{\circ}a$
in $\R^{n}$, an open subset $V$ of some Euclidean space, a smooth
map $\alpha:V\times U\ra\R^{m}$ and some fixed point $v\in\FV$,
such that for every $x\in A\cap\FU$, we have 
\[
f(x)={}^{\bullet}\alpha(v,x).
\]

\end{defn}
In particular, every constant map $A\ra B$ and $i_{U}:U\ra\FU$ are
quasi-standard smooth. Moreover, every quasi-standard smooth map is
continuous with respect to the Fermat topology.

\section{Quasi-standard smooth functions revisited}

In this section, we give another characterization of quasi-standard
smooth functions.
\begin{prop}
\label{prop:qscoo} Let $A$ be a subset of $\FR^{n}$. Then $f:A\ra\FR$
is a quasi-standard smooth function if and only if for every $a\in A$,
there exist an open neighborhood $U$ of $^{\circ}a$ in $\R^{n}$,
some $m=m(f,U)\in\N$, a finite number of ordinary smooth functions
$\{\alpha_{i}:U\ra\R\}_{i=0}^{m}$, and $a_{1},\ldots,a_{m}\in\R$
with $0<a_{1}<a_{2}<\ldots<a_{m}\leq1$ such that 
\begin{equation}
f(x)=\Falpha_{0}(x)+\sum_{i=1}^{m}\Falpha_{i}(x)\cdot t^{a_{i}}~~\forall x\in A\cap\FU.\label{eq:qscoo}
\end{equation}
\end{prop}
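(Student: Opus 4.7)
My plan is to prove the equivalence in two directions. The reverse implication is a direct construction; the forward implication uses Taylor expansion in the parameter variable together with the nilpotency of infinitesimals in $\FR$, followed by bookkeeping on exponents to land in the form~\eqref{eq:qscoo}.

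For the reverse direction, assume $f$ admits the local form~\eqref{eq:qscoo} on $A\cap\FU$. Put $V:=\R^{m}$ and define the smooth map
\[
\alpha:V\times U\to\R,\qquad\alpha(v_{1},\dots,v_{m},u):=\alpha_{0}(u)+\sum_{i=1}^{m}v_{i}\,\alpha_{i}(u).
\]
Take the parameter $v:=(t^{a_{1}},\dots,t^{a_{m}})\in\FR^{m}=\FV$. Reading off on little-oh representatives gives $\Falpha(v,x)(t)=\alpha_{0}(x(t))+\sum_{i=1}^{m}\alpha_{i}(x(t))\,t^{a_{i}}$, which already represents $f(x)$, so $f(x)=\Falpha(v,x)$ on $A\cap\FU$ and $f$ is quasi-standard smooth.

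For the forward direction, fix $a\in A$ and apply the definition of quasi-standard smoothness to obtain an open neighborhood $U$ of $\sta$ in $\R^{n}$, an open subset $V$ of some $\R^{k}$, a smooth $\alpha:V\times U\to\R$, and $v\in\FV$ with $f(x)=\Falpha(v,x)$ for all $x\in A\cap\FU$. Decompose each component of $\delta v:=v-\stv\in\FR^{k}$ via~\eqref{eq:decomposition}; let $b_{1}>0$ be the smallest order appearing across all components, so $\|v(t)-\stv\|=O(t^{b_{1}})$ as $t\to 0^{+}$. Taylor expand $\alpha$ in its first variable about $\stv$:
\[
\alpha\bigl(v(t),x(t)\bigr)=\sum_{|i|\leq N}\frac{1}{i!}\,\frac{\partial^{|i|}\alpha}{\partial v^{i}}(\stv,x(t))\,\bigl(v(t)-\stv\bigr)^{i}+R_{N}(t),
\]
with classical remainder bound $|R_{N}(t)|=O\bigl(\|v(t)-\stv\|^{N+1}\bigr)=O\bigl(t^{(N+1)b_{1}}\bigr)$; the implicit constant is uniform because $x(t)$ remains in a compact subset of $U$ for $t$ small. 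Choosing $N$ with $(N+1)b_{1}>1$ forces $R_{N}(t)=o(t)$, so $R_{N}$ represents $0$ in $\FR$, and as an equality in $\FR$
\[
f(x)=\sum_{|i|\leq N}\Fg_{i}(x)\cdot(\delta v)^{i},\qquad g_{i}(u):=\frac{1}{i!}\,\frac{\partial^{|i|}\alpha}{\partial v^{i}}(\stv,u),
\]
where each $g_{i}$ is a smooth function on $U$.

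To match~\eqref{eq:qscoo}, observe each $(\delta v)^{i}\in\FR$ has, by~\eqref{eq:decomposition}, a finite decomposition $\sum_{j}c_{i,j}\,t^{e_{i,j}}$ with $c_{i,j}\in\R$ and $0<e_{i,j}\leq 1$. Substituting and collecting terms with equal exponents: the $|i|=0$ term contributes $\alpha_{0}(u):=g_{0}(u)=\alpha(\stv,u)$, while the finite set of distinct exponents arising from $|i|\geq 1$ can be ordered $0<a_{1}<\cdots<a_{m}\leq 1$ with coefficient functions
\[
\alpha_{k}(u):=\sum_{(i,j):\,|i|\geq 1,\,e_{i,j}=a_{k}}c_{i,j}\,g_{i}(u),
\]
each a finite $\R$-linear combination of smooth functions, hence smooth. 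Since Fermat extension respects finite real-linear combinations, this produces $f(x)=\Falpha_{0}(x)+\sum_{k=1}^{m}\Falpha_{k}(x)\cdot t^{a_{k}}$ on $A\cap\FU$, as desired. The main technical point is the remainder estimate and its passage into the $o(t)$ class, which reduces to the elementary observation that $\delta v$ has strictly positive order $b_{1}$; once this is in hand, the subsequent exponent bookkeeping is routine.
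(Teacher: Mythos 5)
Your proof is correct and follows essentially the same route as the paper: Taylor expansion of $\Falpha(v,\blank)$ in the parameter about $\stv$, followed by regrouping the finitely many exponents arising from the decompositions of the powers $(\delta v)^{i}$ into the form \eqref{eq:qscoo}. The only differences are matters of detail rather than substance --- you re-derive the finiteness of the expansion via an explicit remainder bound $O(t^{(N+1)b_{1}})=o(t)$ where the paper invokes nilpotency of $\delta v$ directly, and you spell out the explicit parameterized map $\alpha(v,u)=\alpha_{0}(u)+\sum_{i}v_{i}\alpha_{i}(u)$ for the converse where the paper calls it clear.
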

\begin{proof}
($\Rightarrow$) Let $\Falpha(p,\blank):A\cap\FU\ra\FR$ be a local
expression of $f$ near $a\in A$, where $p$ is a fixed parameter.
Then the result follows from rearranging the terms according to the
decomposition of $\delta p,(\delta p)^{2},\ldots,(\delta p)^{k}$
for $k=\omega(\delta p)$, after Taylor's expansion of $\Falpha(p,\blank)=\Falpha(\stp+\delta p,\blank)$
with respect to $\stp$. Here we have also used the fact that total
Taylor's expansion of a smooth function with several variables (for
nilpotent infinitesimals) is the same as Taylor's expansion by one
variable after another.

($\Leftarrow$) This is clear.
\end{proof}
The key point of the above proposition is, if we further assume that
$^{\circ}A:=\{\stx\mid x\in A\}$ is an open subset of $\R^{n}$ and
$^{\circ}A\subseteq A$, then the expression in (\ref{eq:qscoo})
is unique, while the expression in~\cite[Theorem~12.1.9]{G1} is
not in general. Here is the proof. Assume that we have two expressions:
\[
\begin{split}f(x) & =\Falpha_{0}(x)+\sum_{i=1}^{m}\Falpha_{i}(x)\cdot t^{a_{i}}\\
 & =\Fbeta_{0}(x)+\sum_{j=1}^{l}\Fbeta_{j}(x)\cdot t^{b_{j}}
\end{split}
\]
for all $x\in A\cap\FU$ with $m,l\in\N$, $\alpha_{i}:U\ra\R$ and
$\beta_{j}:U\ra\R$ ordinary smooth functions, and $0<a_{1}<a_{2}<\ldots<a_{m}\leq1$,
$0<b_{1}<b_{2}<\ldots<b_{l}\leq1$. We may assume that $U\subseteq{^{\circ}A}$
since $^{\circ}A$ is open in $\R^{n}$. For every $x\in{^{\circ}A}\subseteq A$,
we can conclude that $m=l$, $\{a_{1},\ldots,a_{m}\}=\{b_{1},\ldots,b_{m}\}$,
and $\alpha_{i}(x)=\beta_{i}(x)$ for $i=0,1,\ldots,m$ by the uniqueness
of decomposition of elements in $\FR$. Hence, $\Falpha_{i}(x)=\Fbeta_{i}(x)$
for each $i$.
\begin{cor}
Let $A$ be a subset of $\FR^{n}$ such that $^{\circ}A$ is an open
subset of $\R^{n}$ and $^{\circ}A\subseteq A$. Then $f:A\ra\FR$
is a quasi-standard smooth function if and only if for every precompact
subset $K$ of $A$ in the Fermat topology (i.e., the closure of $^{\circ}K$
is compact in $\R^{n}$), there exist $m=m(f,K)\in\N$, a finite number
of ordinary smooth functions $\{\alpha_{i}:U\ra\R\}_{i=0}^{m}$ with
$U$ an open neighborhood of $^{\circ}K$ in $\R^{n}$, and $a_{1},\ldots,a_{m}\in\R$
with $0<a_{1}<a_{2}<\ldots<a_{m}\leq1$ such that 
\[
f(x)=\Falpha_{0}(x)+\sum_{i=1}^{m}\Falpha_{i}(x)\cdot t^{a_{i}}~~\forall x\in K.
\]
\end{cor}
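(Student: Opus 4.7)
The strategy is to reduce both directions to Proposition~\ref{prop:qscoo}, using in the harder direction the uniqueness of the representation~\eqref{eq:qscoo} noted immediately after its proof (which applies whenever the underlying set has open standard part contained in itself). For the easy direction $(\Leftarrow)$, fix $a\in A$; since $^{\circ}A$ is open in $\R^{n}$, I would choose an open neighborhood $U$ of $\sta$ in $\R^{n}$ whose closure is compact and contained in $^{\circ}A$. Then $K:=A\cap\FU$ satisfies $^{\circ}K\subseteq U$, so $K$ is precompact in the sense of the statement, and the hypothesis applied to this $K$ yields precisely the local expression near $a$ demanded by Proposition~\ref{prop:qscoo}.

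For the nontrivial direction $(\Rightarrow)$, let $K\subseteq A$ be precompact. Proposition~\ref{prop:qscoo} furnishes, for each $a\in A$, an open neighborhood $U_{a}$ of $\sta$ in $\R^{n}$ and a local expression of the form \eqref{eq:qscoo} on $A\cap\FU_{a}$; by replacing $U_{a}$ with $U_{a}\cap{^{\circ}A}$ I may assume $U_{a}\subseteq{^{\circ}A}$. Covering the compact set $\overline{{}^{\circ}K}$ by finitely many such neighborhoods $U_{1},\ldots,U_{N}$ produces representations
\[
f(x)=\Falpha_{j,0}(x)+\sum_{i=1}^{m_{j}}\Falpha_{j,i}(x)\cdot t^{a_{j,i}}\quad\forall x\in A\cap\FU_{j},\quad j=1,\ldots,N,
\]
each with its own exponent set. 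Letting $\{b_{1}<\cdots<b_{M}\}$ be the finite union of all the exponents $a_{j,i}$, I would rewrite each representation on the common exponent set $\{b_{k}\}$ by inserting zero coefficient functions.

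The crucial step is to glue the padded coefficients. On any overlap, the set $A':=A\cap{}^{\bullet}(U_{j}\cap U_{k})$ satisfies ${}^{\circ}A'=U_{j}\cap U_{k}$, which is open in $\R^{n}$ and contained in $A'$ (here I use that $U_{j},U_{k}\subseteq{^{\circ}A}$), so the uniqueness clause following Proposition~\ref{prop:qscoo} forces the $j$th and $k$th padded coefficient functions to agree on $U_{j}\cap U_{k}$. Consequently they patch to smooth functions $\alpha_{0},\alpha_{1},\ldots,\alpha_{M}$ defined on the open neighborhood $U:=\bigcup_{j=1}^{N}U_{j}\supseteq\overline{{}^{\circ}K}$, and
\[
f(x)=\Falpha_{0}(x)+\sum_{k=1}^{M}\Falpha_{k}(x)\cdot t^{b_{k}}
\]
holds on $K\subseteq A\cap\FU$, as required. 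The main obstacle is precisely this gluing: I need the uniqueness statement from after Proposition~\ref{prop:qscoo} to genuinely apply to each overlap, which is why I shrink each $U_{a}$ into $^{\circ}A$ at the outset; once that is secured, the remainder is finite bookkeeping over the compact cover.
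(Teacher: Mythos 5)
Your proof is correct and takes essentially the same approach as the paper: the paper's own proof is a one-line appeal to Proposition~\ref{prop:qscoo} and the uniqueness discussion preceding the corollary, which is exactly the cover--pad--glue argument you carry out in detail. The only point worth making explicit is that your finite cover of $\overline{{}^{\circ}K}$ by neighborhoods contained in ${}^{\circ}A$ requires $\overline{{}^{\circ}K}\subseteq{}^{\circ}A$, which is how the precompactness hypothesis (closure taken relative to $A$) must be read for the statement to hold at all.
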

\begin{proof}
This is straightforward from the above discussion together with Proposition~\ref{prop:qscoo}.
\end{proof}

\section{Topologies and convergences in Fermat reals}

The main focus of the first part of this paper is to discuss convergences
in Fermat reals. To define convergences, we need a topology on Fermat
reals, and in order to make limit unique, we need the topology to
be Hausdorff. Since the Fermat topology is not Hausdorff, we will
introduce and study new Hausdorff topologies: the $\omega$-topology,
the order topology,%
\footnote{Set theoretically, $\FR=\R\times D_{\infty}$, i.e., the Cartesian
product of starndard part and infinitesimal part. The $\omega$-topology
essentially relates to the starndard part (i.e., the points in a small
neighborhood only differ from the standard part), and the order topology
essentially relates to the infinitesimal part.%
} and the Euclidean topology. Note that the $\omega$-topology was
first introduced in~\cite{GK}. We also explore the properties of
convergences with respect to these topologies, together with comparisons
to the convergences of ordinary smooth functions.

We first fix some notations:
\begin{defn}
A topology on a group is called \emph{additive} if the group operations
are continuous with respect to this topology. In other words, the
group with this topology is a topological group.

\label{de:topology} A topology on Fermat reals is an additive Hausdorff
topology on $\FR$, which then induces the product topology on $\FR^{n}$
for each $n\in\N$.%
\footnote{As a convention, from now on, whenever there is no adjective in front
of the word ``topology'' for Fermat reals, we mean the topology in
this sense; otherwise, it has the usual meaning.%
}
\end{defn}
Note that $\FR^{n}$ with coordinate-wise addition and the induced
topology is a topological group, since topological groups are closed
under finite products.
\begin{defn}
Let $\tau$ be a topology on Fermat reals. A sequence $(f_{n}:U\ra\FR)_{n\in\N}$
of quasi-standard smooth functions from $U\subseteq\FR^{k}$ is called
\emph{pointwise convergent in }$\tau$ if for each $x\in U$, $\lim_{n\ra\infty}f_{n}(x)$
exists in $\tau$. In other words, there exists a function (not necessarily
quasi-standard smooth; see Example~\ref{eg:ptwise-convergent}) $f:U\ra\FR$
with the property that for every $x\in U$, for any $\tau$-open neighborhood
$T$ of $f(x)$, there exists $N=N(x)\in\N$, such that for any $n>N$,
we have $f_{n}(x)\in T$.
\end{defn}
Note that we do not need additivity of the topology to define pointwise
convergence, but we need it for uniform convergence:
\begin{defn}
Let $\tau$ be a topology on Fermat reals. A sequence $(f_{n}:U\ra\FR)_{n\in\N}$
of quasi-standard smooth functions from $U\subseteq\FR^{k}$ is called
\emph{uniformly convergent in }$\tau$ if there exists a function
(not necessarily quasi-standard smooth) $f:U\ra\FR$ with the property
that for any $\tau$-open neighborhood $T$ of $0\in\FR$, there exists
$N\in\N$ such that for any $n>N$, we have $f_{n}(x)-f(x)\in T$
for every $x\in U$.
\end{defn}
Note that the convergence (both pointwise and uniform) of a sequence
of quasi-standard smooth functions only depends on the topology of
the codomain.

\subsection{The $\omega$-topology}

The $\omega$-topology on Fermat reals was first introduced in~\cite{GK}.
We review some basics of the $\omega$-topology here without any details.
The $\omega$-topology on $\FR^{n}$ is induced by a complete metric
$d_{\omega}:\FR^{n}\times\FR^{n}\ra\R_{\geq0}$ defined by $d_{\omega}(x,y)=\|\stx-\sty\|+\sum_{i=1}^{n}\omega(x_{i}-y_{i})$,
where $x_{i},y_{i}\in\FR$ are the $i^{th}$-coordinates of $x,y\in\FR^{n}$
respectively. It has a base consisting of all balls $B_{s}(x;d_{\omega})$
for $x\in\FR^{n}$ and $s\in(0,1]_{\R}$, where $B_{s}(x;d_{\omega})$
is simply $\{x+r\mid r\in\R^{n},\|r\|<s\}$. It is clear that the
$\omega$-topology on $\FR^{n}$ defined this way coincides with the
product topology of the $\omega$-topology on $\FR$, and the restriction
of the $\omega$-topology to $\R^{n}$ is the standard topology. Moreover,
the $\omega$-topology is strictly finer than the Fermat topology,
and it does not behave well with quasi-standard smooth functions (see~\cite[Section~3]{GW}).
In other words, $\FR$ with the $\omega$-topology is a topological
group, but not a topological $\R$-vector space.

Compared to convergences of sequences of ordinary smooth functions,
convergences in the $\omega$-topology is very restrictive; see the
following three results.
\begin{lem}
\label{lem:ptwise-convergent-omega} Let $(a_{n})_{n\in\N}$ be a
sequence in $\FR$. If it converges in the $\omega$-topology, then
there exist a convergent sequence $(b_{n})_{n\in\N}$ in $\R$ and
$N\in\N$ such that for all $n>N$, we have $a_{n}=a_{N}+b_{n}$.
\end{lem}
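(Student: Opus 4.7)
The plan is to exploit the very restrictive shape of the basic $\omega$-neighborhoods. Since the base consists of balls $B_s(x;d_\omega)=\{x+r\mid r\in\R,\ |r|<s\}$ for $s\in(0,1]_\R$, any $\omega$-convergent sequence is eventually forced to lie in an affine real line through the limit, and this makes the conclusion essentially immediate.

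First I would verify carefully this description of the ball using the hypothesis $s\le 1$. If $y\in B_s(x;d_\omega)$ with $s\le 1$, then $\|\stx-\sty\|+\omega(x-y)<s\le 1$. Any nonzero infinitesimal in $D_\infty$ has exponent $b_1\le 1$ in its decomposition and hence $\omega\ge 1$, so $x-y$ must be a real number, and the inequality collapses to $|x-y|<s$. (This is already in \cite{GK}, but it is the only computational input I need.)

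Next, assuming $a_n\to a$ in the $\omega$-topology, I would apply the definition of convergence to the ball $B_1(a;d_\omega)$ to find $N\in\N$ such that $a_n-a\in\R$ for every $n>N$. I then set $b_n:=a_n-a_N$ for $n>N$ (and, say, $b_n:=0$ for $n\le N$). The identity $a_n=a_N+b_n$ is then immediate, and each $b_n$ (for $n>N$) is a real number since $a_n-a_N=(a_n-a)-(a_N-a)\in\R$.

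Finally, to show $(b_n)$ converges in $\R$, I would pick any $\epsilon\in(0,1]_\R$, use convergence with the ball $B_\epsilon(a;d_\omega)$ to get $a_n-a=:s_n\in\R$ with $|s_n|<\epsilon$ for all sufficiently large $n$, and then observe $b_n=s_n-s_N\to -s_N$ in $\R$. There is no real obstacle: the lemma is a direct unpacking of the definition of the $\omega$-topology, and the only thing one must notice is that its basic balls of radius $\le 1$ admit no genuinely infinitesimal displacement, so convergent tails are trapped on a single real translate of $a_N$.
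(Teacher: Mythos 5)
Your proposal is correct and follows essentially the same route as the paper: both use the fact that the basic ball $B_{1}(a;d_{\omega})$ consists only of real translates of $a$ to trap the tail of the sequence in $a+\R$, then define $b_{n}=a_{n}-a_{N}$ and deduce real convergence of $(b_{n})$ from the smaller balls $B_{\epsilon}(a;d_{\omega})$. The only difference is that you spell out the last step (which the paper dismisses as ``easy'') and you should make sure $N$ is chosen so that $a_{N}-a\in\R$ as well, a trivial indexing adjustment.
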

The converse of the above lemma is trivially true.
\begin{proof}
Write $a$ for the limit of the sequence $(a_{n})_{n\in\N}$ in the
$\omega$-topology. Since the $\omega$-open neighborhood $B_{1}(a;d_{\omega})$
of $a$ is the set $\{a+r\mid r\in\R,|r|<1\}$, we know that there
exists $N\in\N$ such that for every $n\geq N$, we have $a_{n}\in B_{1}(a;d_{\omega})$.
In other words, $a_{n}-a_{N}\in\R$ for $n\geq N$. We set 
\[
b_{n}=\begin{cases}
a_{n}-a_{N}, & \textrm{if \ensuremath{n>N}}\\
0, & \textrm{otherwise.}
\end{cases}
\]
The rest of the proof is easy.\end{proof}
\begin{prop}
Let $(f_{n}:U\ra\R)$ be a sequence of ordinary smooth functions from
an open connected subset $U\subseteq\R$. If the sequence $(\Ff_{n}:\FU\ra\FR)_{n\in\N}$
converges uniformly in the $\omega$-topology, then there exists $N\in\N$
and a convergent sequence $(a_{n})_{n\in\N}$ in $\R$ such that for
every $n>N$, we have $f_{n}=f_{N}+a_{n}$.
\end{prop}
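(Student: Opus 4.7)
The plan is to use the uniform convergence hypothesis to force $\Ff_n-\Ff_N$ to be real-valued on all of $\FU$ for $n$ large, then to probe this information with infinitesimal perturbations of the form $t^{1/k}$ to show that $f_n-f_N$ has vanishing derivatives of every order $\geq 1$, and finally to invoke connectedness of $U$.

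First, I would apply uniform convergence to the basic $\omega$-neighborhood $B_1(0;d_\omega)=\{r\in\R:|r|<1\}$ of $0$. This yields some $N_0\in\N$ with $\Ff_n(x)-f(x)\in\R$ for every $n>N_0$ and every $x\in\FU$. Setting $N:=N_0+1$, it follows that $\Ff_n(x)-\Ff_N(x)\in\R$ for all $n>N$ and all $x\in\FU$; this is the real content of the hypothesis that will be used.

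Next, fix $u\in U$ and $n>N$. For each $k\in\N$, consider the Fermat real $x:=u+t^{1/k}\in\FU$ (its standard part is $u$). Because the smallest exponent exceeding $1$ in the decomposition is $(k+1)/k$, we have $(t^{1/k})^{k+1}=0$ in $\FR$, so Taylor expansion of $\Ff_n$ and $\Ff_N$ at $u$ gives
$$\Ff_n(x)-\Ff_N(x)=\bigl(f_n(u)-f_N(u)\bigr)+\sum_{j=1}^{k}\frac{1}{j!}\bigl(f_n^{(j)}(u)-f_N^{(j)}(u)\bigr)\,t^{j/k}.$$
The left-hand side is real by the previous step, so by uniqueness of decomposition in $\FR$ each coefficient of $t^{j/k}$ for $1\leq j\leq k$ must vanish, i.e.\ $f_n^{(j)}(u)=f_N^{(j)}(u)$ for $1\leq j\leq k$. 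Since $u$ and $k$ are arbitrary, $(f_n-f_N)'\equiv 0$ on the connected open set $U$, and so $f_n-f_N=a_n$ for a unique $a_n\in\R$.

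To finish, pick any $u_0\in U$ and observe that $a_n=\Ff_n(u_0)-\Ff_N(u_0)$ for $n>N$ (define $a_n$ arbitrarily for $n\leq N$). Since $\Ff_n(u_0)\to f(u_0)$ in the $\omega$-topology and this topology restricts to the Euclidean topology on $\R\subset\FR$, one obtains $a_n\to f(u_0)-f_N(u_0)$ in $\R$. The main obstacle is the middle step: extracting an infinite amount of derivative information from the single scalar condition $\Ff_n(x)-\Ff_N(x)\in\R$. The device that makes this work is that varying the order $1/k$ of the probe $t^{1/k}$ raises its nilpotency degree to $k+1$ and thus reveals arbitrarily many derivatives at once, while uniqueness of decomposition in $\FR$ isolates them cleanly.
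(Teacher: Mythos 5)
Your proof is correct and follows essentially the same route as the paper: use the radius-one $\omega$-ball to force $\Ff_{n}-\Ff_{N}$ to be real-valued on all of $\FU$, extract derivative information by Taylor expansion at infinitesimally perturbed points together with uniqueness of decomposition, and conclude via the constant function theorem on the connected set $U$. The only (harmless) difference is that you establish $f_{n}^{(j)}=f_{N}^{(j)}$ for every $j\geq1$ using the probes $t^{1/k}$, whereas the paper only needs the case $j=1$ before invoking connectedness.
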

The converse of the above proposition is trivially true.
\begin{proof}
By the previous lemma and the definition of uniform convergence, we
know that there exists $N\in\N$ such that for any $n>N$ and any
$u\in\FU$, $\Ff_{n}(u)$ and $\Ff_{N}(u)$ only differ from the standard
part. In particular, this implies that for any $n>N$ and any $x\in U$,
we have $f_{n}'(x)=f_{N}'(x)$. By the constant function theorem,
we know that $f_{n}-f_{N}$ is constant for every $n>N$, since the
domain $U$ of these functions is connected. Let $x_{0}\in U$ be
any point, and define 
\[
a_{n}=\begin{cases}
f_{n}(x_{0})-f_{N}(x_{0}), & \textrm{if \ensuremath{n>N}}\\
0, & \textrm{otherwise.}
\end{cases}
\]
The rest of the proof is easy.
\end{proof}
More generally, we have
\begin{thm}
Let $U$ be a connected open subset of $\R^{n}$, and let $A$ be
a subset of $\FR^{n}$ such that $\FU\subseteq A\subseteq\overline{\FU}$,
where $\overline{\FU}$ denotes the closure of $\FU$ in $\FR^{n}$
with respect to the $\omega$-topology. If $(f_{m}:A\ra\FR)_{m\in\N}$
is a uniformly convergent sequence of quasi-standard smooth functions
in the $\omega$-topology, then there exist a convergent sequence
$(a_{m})_{m\in\N}$ in $\R$ and $N\in\N$ such that for every $m>N$,
we have $f_{m}=f_{N}+a_{m}$. In particular, the limit function is
also quasi-standard smooth.
\end{thm}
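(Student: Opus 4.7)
The plan is to extend the one-variable proof of the preceding proposition. The main new ingredient is a local analysis at each point of $U$ using Proposition~\ref{prop:qscoo}, together with probing of $g_m := f_m - f_N$ by nilpotent infinitesimals.

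First, by uniform convergence of $(f_m)$ in the $\omega$-topology applied to the $\omega$-open neighborhood $B_{1}(0;d_{\omega})$, Lemma~\ref{lem:ptwise-convergent-omega} produces an $N\in\N$ such that for every $m>N$ and every $x\in A$, the value $g_m(x):=f_m(x)-f_N(x)$ lies in $\R$. Each $g_m$ is again quasi-standard smooth. Once one shows that each such $g_m$ is a constant $a_m\in\R$ on $A$, pointwise convergence of $(f_m)$ at any single point of $A$ forces $(a_m)$ to converge in the $\omega$-topology, and since $\R$ is closed in the $\omega$-topology with the induced standard topology, $(a_m)$ converges in $\R$; the limit function is then $f_N+\lim_m a_m$, which is quasi-standard smooth.

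The main step is to show that $g_m$ is locally constant on $U$. Fix $u\in U$ and apply Proposition~\ref{prop:qscoo} to write
\[
g_m(x)=\Falpha_0(x)+\sum_{i=1}^{l}\Falpha_i(x)\cdot t^{a_i}
\]
on $A\cap\FU'$ for some open neighborhood $U'\subseteq U$ of $u$ and smooth $\alpha_i:U'\to\R$. Evaluating at $x\in U'\subseteq A$ lands in $\R$, so uniqueness of decomposition in $\FR$ forces $\alpha_i\equiv 0$ on $U'$ for $i\geq 1$; thus $g_m=\Falpha_0$ on $\FU'$. Next, for each standard basis vector $e_k$, each $\beta\in\R$, and each $a\in(0,1]_\R$, the point $u+\beta t^a e_k$ lies in $\FU'\subseteq A$, and Taylor's formula yields
\[
g_m(u+\beta t^a e_k)=\alpha_0(u)+\sum_{j=1}^{\lfloor 1/a\rfloor}\frac{\beta^j}{j!}\frac{\partial^j\alpha_0}{\partial x_k^j}(u)\, t^{ja}.
\]
Requiring the left-hand side to lie in $\R$ and invoking uniqueness of decomposition a second time forces $\partial^j\alpha_0/\partial x_k^j(u)=0$ whenever $ja\leq 1$; letting $a$ shrink and $k$ vary, every partial derivative of $\alpha_0$ vanishes on $U'$, so $\alpha_0$ is constant on $U'$.

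For globalization, the local constants agree on overlaps and $U$ is connected, producing a single $a_m\in\R$ with $g_m\equiv a_m$ on $U$; the local formula $g_m=\Falpha_0$ with $\alpha_0$ constant extends this to $g_m\equiv a_m$ on $\FU$. To pass from $\FU$ to $A$, note that the $\omega$-topology is finer than the Fermat topology, so $A\subseteq\overline{\FU}^{\omega}\subseteq\overline{\FU}^{\text{Fermat}}$; Fermat-continuity of the quasi-standard smooth map $g_m$, combined with the Fermat-closedness of the singleton $\{a_m\}\subseteq\R\subseteq\FR$, then gives $g_m\equiv a_m$ on all of $A$. The main obstacle is the local analysis above: killing the coefficients $\alpha_i$ for $i\geq 1$ is easy, but forcing all higher partial derivatives of $\alpha_0$ to vanish really uses the assumption $\FU\subseteq A$, which allows evaluation of $g_m$ at infinitesimally perturbed standard points.
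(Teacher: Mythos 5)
Your proposal is correct in substance and reaches the theorem along the same overall skeleton as the paper (use $B_{1}(0;d_{\omega})$ and Lemma~\ref{lem:ptwise-convergent-omega} to find $N$ with $g_{m}:=f_{m}-f_{N}$ real-valued, show $g_{m}$ is constant, pass from $\FU$ to $A$), but the two inner steps are carried out by genuinely different means. The paper outsources both: the reduction from $A$ to $\FU$ is done by citing the uniqueness theorem \cite[Theorem~5]{GW}, and the constancy of the real-valued quasi-standard smooth $g_{m}$ on the Fermat-connected set $\FU$ is quoted from \cite[Proposition~24]{W}. You instead prove the constancy from scratch --- writing $g_{m}$ locally via Proposition~\ref{prop:qscoo}, killing the $\Falpha_{i}$ ($i\ge1$) by evaluating at standard points, and killing the derivatives of $\alpha_{0}$ by probing with $u+\beta t^{a}e_{k}$ and invoking uniqueness of decomposition --- and you replace the uniqueness theorem by a continuity-plus-closure argument. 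This makes your proof self-contained where the paper's is not, at the cost of length; the infinitesimal-probing computation is a legitimate and standard device in this setting (indeed only the first-order partials are needed, so the argument is even shorter than you present it).

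One justification in your final step is wrong as stated: a singleton $\{a_{m}\}\subseteq\FR$ is \emph{not} closed in the Fermat topology (Fermat-closed sets are of the form $\{x\mid\stx\in C\}$ for $C$ closed in $\R^{n}$, so the Fermat closure of $\{a_{m}\}$ is $a_{m}+D_{\infty}$). This does not sink the argument: Fermat-continuity of $g_{m}$ gives $g_{m}(x)\in a_{m}+D_{\infty}$ for $x\in A\subseteq\overline{\FU}^{\mathrm{Fermat}}$, and since you already established $g_{m}(A)\subseteq\R$ in the first step, $(a_{m}+D_{\infty})\cap\R=\{a_{m}\}$ finishes it. You should also say ``locally constant'' rather than ``constant'' on $U'$ (which need not be connected); connectedness of $U$ then globalizes as you indicate.
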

The converse of the above theorem is trivially true.
\begin{proof}
Note that $\overline{\FU}=\FR^{n}\setminus{^{\bullet}(\R^{n}\setminus\bar{U})}$,
where $\bar{U}$ is the closure of $U$ in $\R^{n}$. By the uniqueness
theorem (\cite[Theorem~5]{GW}), it is enough to prove the statement
for $A=\FU$.

Let $f$ be the limit of the sequence $(f_{m}:A\ra\FR)_{m\in\N}$
of quasi-standard smooth functions. By the definition of uniform convergence
and Lemma~\ref{lem:ptwise-convergent-omega}, we know that there
exists $N\in\N$ such that for any $m\geq N$ and any $x\in\FU$,
$f_{m}(x)$ and $f(x)$ only differ from the standard part. Hence,
$f_{m}(x)$ and $f_{N}(x)$ only differ from the standard part. In
other words, we have a quasi-standard smooth function $g_{m}:\FU\ra\R$
defined by $x\mapsto f_{m}(x)-f_{N}(x)$. Since $U$ is connected
in $\R^{n}$, $\FU$ is connected in $\FR^{n}$ with respect to the
Fermat topology. By~\cite[Proposition~24]{W}, we know that $g_{m}$
is constant. The rest of the proof is easy.
\end{proof}

\subsection{The order topology}

Since $\FR$ is a totally ordered ring, we have the order topology
on $\FR$. Note that the order topology on $\FR$ is a Hausdorff topology,
which has a base consisting of open intervals $(a,b)=\{x\in\FR\mid a<x<b\}$
for $a,b\in\FR$. Hence, the restriction of the order topology to
$\R$ is the discrete topology (for example, $(-t,t)\cap\R=\{0\}$).
\begin{prop}
$\FR$ with the order topology is a topological group, but not a topological
$\R$-vector space. In other words, the order topology does not behave
well with quasi-standard smooth functions.\end{prop}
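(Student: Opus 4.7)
The plan breaks into the two claims.

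For the topological-group claim, I would verify continuity of negation and addition in the order topology. Negation $x \mapsto -x$ is an order-reversing bijection sending the basic open interval $(a,b)$ to $(-b,-a)$, so it is a homeomorphism. For addition, fix $a, b \in \FR$ and a basic open neighborhood $(p, q)$ of $a+b$. Set $d := \min\{(a+b) - p,\; q - (a+b)\}$, which is strictly positive in $\FR$, and put $c := d/3 > 0$ (legitimate because $\R \subseteq \FR$, so division by positive reals is available). A direct order-computation then gives
\[
(a - c,\, a + c) + (b - c,\, b + c) \;\subseteq\; (a + b - 2c,\, a + b + 2c) \;\subseteq\; (p, q),
\]
proving joint continuity of addition at $(a, b)$; since this works for every $a, b$ and basic open neighborhood of $a + b$, addition is continuous.

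For the vector-space claim, I would exhibit discontinuity of scalar multiplication $\R \times \FR \to \FR$ at the point $(0, 1)$, with $\R$ equipped with its standard topology. As in the preceding text, write $t \in \FR$ for the positive infinitesimal represented by the identity little-oh polynomial, so that $(-t, t)$ is a basic open neighborhood of $0 \in \FR$. If scalar multiplication were continuous at $(0, 1)$, there would exist $\delta_1 \in \R_{>0}$ and $\delta_2 \in \FR$ with $\delta_2 > 0$ such that $r x \in (-t, t)$ for every $r \in (-\delta_1, \delta_1)_\R$ and every $x \in (1 - \delta_2,\, 1 + \delta_2)$. But choosing $x = 1$ and $r = \delta_1/2$ yields $r x = \delta_1/2 \in \R_{>0}$, which has positive standard part and hence satisfies $\delta_1/2 > t$ in $\FR$; in particular $\delta_1/2 \notin (-t, t)$, a contradiction.

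The accompanying remark about quasi-standard smooth functions reflects the same obstruction: scalar multiplication by a real parameter is the prototypical quasi-standard smooth operation (the Fermat extension of $(r, x) \mapsto rx$), and its joint discontinuity shows that parametric families of quasi-standard smooth functions are not continuous in the order topology in the expected way. The conceptual crux---and the only real obstacle---is identifying where the topologies disagree: the restriction of the order topology to $\R$ is discrete, strictly finer than the standard topology, so joint continuity of scalar multiplication would force $\R$-neighborhoods of $0$ to consist of infinitesimals, which do not exist inside $\R$.
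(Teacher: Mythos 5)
Your proof is correct and takes essentially the same approach as the paper: both establish the negative claim by exhibiting a single point where scalar multiplication $\R\times\FR\ra\FR$ fails to be continuous, the paper using the point $(1,t^{1/2})$ with target neighborhood $(t^{1/2}-t,t^{1/2}+t)$ where you use $(0,1)$ with $(-t,t)$. The topological-group half, which you spell out via the $\eps/3$-style estimate for addition, is dismissed as ``straightforward'' in the paper.
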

\begin{proof}
Note that the preimage of the order open subset $(t^{1/2}-t,t^{1/2}+t)$
of the scalar multiplication map $\R\times\FR\ra\FR$ contains $(1,t^{1/2})\in\R\times\FR$,
but none of its open neighborhoods. The rest is straightforward.\end{proof}
\begin{lem}
The order topology on $\FR^{n}$ is first countable.\end{lem}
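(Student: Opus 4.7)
The plan is to exhibit, for every point of $\FR$, an explicit countable neighborhood base in the order topology; first countability of the product topology on $\FR^n$ then follows because countable products of first countable spaces are first countable.

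The heart of the argument is the following claim: for every $x \in \FR$ with $x > 0$, there exists $n \in \N$ with $n \geq 1$ such that $0 < t/n < x$. To prove this, write $x$ in its decomposition $x = \stx + \sum_{i=1}^{l} \beta_i t^{b_i}$ and compare with $t/n$ (whose decomposition is $0 + (1/n)\, t^{1}$) via the dictionary order, after placing both elements in a common quasi-decomposition with exponent set $\{b_{1},\ldots,b_{l}\}\cup\{1\}$. If $\stx > 0$, the comparison is decided immediately by the standard parts. If $\stx = 0$, then $l \geq 1$ and $\beta_{1} > 0$, and the comparison is decided at the coefficient of $t^{b_{1}}$ (where $x$ has $\beta_{1}>0$ and $t/n$ has $0$) unless we are in the edge case $l=1$ and $b_{1}=1$, i.e.\ $x = \beta_{1} t$; in that case one simply chooses $n > 1/\beta_{1}$.

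Granted the claim, the family $\{(-t/n,\,t/n) : n \in \N,\, n \geq 1\}$ is a countable neighborhood base at $0$: any order-open set containing $0$ contains a basic interval $(a,b)$ with $a<0<b$, and applying the claim to $-a>0$ and $b>0$ (then taking the maximum of the two integers produced) yields $n$ with $(-t/n,\,t/n)\subseteq (a,b)$. Since the order topology makes $\FR$ a topological group (the proposition immediately preceding this lemma), translation is a self-homeomorphism of $\FR$, so $\{(x-t/n,\,x+t/n) : n\in\N,\, n\geq 1\}$ is a countable neighborhood base at any $x\in\FR$. Taking the product topology on $\FR^{n}$ and using that countable products preserve first countability completes the proof.

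The main obstacle is the dictionary-order comparison underlying the claim: one must carefully track how the exponents occurring in the decomposition of $x$ interact with the exponent $1$ of $t/n$, and separate out the single edge case $x=\beta_{1} t$ where $1/n$ must be chosen below $\beta_{1}$. Everything afterwards is transport along translations together with the standard stability of first countability under countable products.
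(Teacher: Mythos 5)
Your proposal is correct and follows essentially the same route as the paper: reduce to $n=1$ via the product topology, use the topological group structure to reduce to a countable neighborhood base at $0$, and exhibit $\left(-\frac{1}{n}t,\frac{1}{n}t\right)$ as that base. The only difference is that you spell out the dictionary-order verification that every positive $x\in\FR$ dominates some $t/n$, a detail the paper asserts without proof.
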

\begin{proof}
It is enough to prove this for $n=1$. By the previous proposition,
it is enough to show that $0\in\FR$ has a countable neighborhood
base in the order topology, which can in fact be chosen as $(-\frac{1}{n}t,\frac{1}{n}t)$
for $n\in\Z^{>0}$.
\end{proof}
It is clear that the order topology on $\FR^{n}$ contains the Fermat
topology, but it is not comparable with the $\omega$-topology.

Compared to convergences of sequences of ordinary smooth functions,
convergences in the order topology is also very restrictive; see the
following two results.
\begin{lem}
\label{lem:ptwise-convergent-order} Let $(a_{n})_{n\in\N}$ be a
sequence in $\FR$. If it converges in the order topology, then there
exists $N\in\N$ such that for all $n>N$, $a_{n}-a_{N}=b_{n}t$ for
some convergent sequence $(b_{n})_{n\in\N}$ in $\R$. In other words,
the coefficient of $t^{i}$ in the decomposition of $a_{n}$ are fixed
for all $0<i<1$ and all $n>N$. 
\end{lem}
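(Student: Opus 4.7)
The plan is to describe explicitly the basic order-topology neighborhoods of a point $a\in\FR$ and to see that already the coarsest of them forces the infinitesimal part of the tail to stabilize. Since the order topology on $\FR$ is additive, every such neighborhood is a translate of a neighborhood of $0$, so everything reduces to computing the intervals $(-\tfrac{1}{n}t,\tfrac{1}{n}t)$.

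First I would show that
\[
\Bigl(-\tfrac{1}{n}t,\tfrac{1}{n}t\Bigr)=\bigl\{c\,t\mid c\in\R,\ |c|<\tfrac{1}{n}\bigr\}
\]
for every $n\in\Z^{>0}$, by unpacking the dictionary ordering. Take $y\in\FR$ with canonical decomposition $y=\sty+\sum_{j=1}^m\gamma_j t^{c_j}$, $0<c_1<\cdots<c_m\le 1$, $\gamma_j\neq 0$. If $\sty\neq 0$, then already at the first slot of the dictionary comparison $y$ dominates $\pm\tfrac{1}{n}t$, so $y$ lies outside the interval. Assuming $\sty=0$, compare $y$ and $\pm\tfrac{1}{n}t$ in their common quasi-decomposition on the exponents $c_1,\dots,c_m,1$: both $\pm\tfrac{1}{n}t$ have coefficient $0$ at every exponent $c_j<1$. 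Hence the presence of any $c_j<1$ would force $\gamma_1$ to be simultaneously positive (to ensure $-\tfrac{1}{n}t<y$) and negative (to ensure $y<\tfrac{1}{n}t$), which is impossible. Therefore $y=c\,t$ for some $c\in\R$, and the comparison at exponent $1$ then gives exactly $|c|<\tfrac{1}{n}$.

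With that description in hand, the lemma is immediate. By translation, $(a-\tfrac{1}{n}t,a+\tfrac{1}{n}t)=\{a+c\,t\mid c\in\R,\ |c|<\tfrac{1}{n}\}$. Given $a_n\to a$ in the order topology, applying convergence to the open set $(a-t,a+t)$ yields $N\in\N$ such that $a_n=a+c_n t$ with $c_n\in\R$ and $|c_n|<1$ for every $n\geq N$. Hence for $n>N$,
\[
a_n-a_N=(c_n-c_N)\,t=b_n\,t,\qquad b_n:=c_n-c_N\in\R.
\]
Applying convergence to the finer neighborhoods $(a-\tfrac{1}{k}t,a+\tfrac{1}{k}t)$ for each $k$ forces $c_n\to 0$ in $\R$, so $b_n\to -c_N$ in $\R$. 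Finally, combining $a_n=a+c_n t$ with the canonical decomposition of $a$ shows that for every $0<i<1$ the coefficient of $t^i$ in the decomposition of $a_n$ agrees with that of $a$ whenever $n>N$, which is the ``in other words'' clause.

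There is no genuine obstacle here: the only computation with any content is the dictionary-order argument isolating $(-\tfrac{1}{n}t,\tfrac{1}{n}t)$. The one subtle point worth checking is the case where the canonical decomposition of $a$ already has a $t$-coefficient (i.e.\ $b_l=1$), but since the argument only tracks the difference $a_n-a$ rather than the absolute decomposition of $a_n$, no extra bookkeeping is required.
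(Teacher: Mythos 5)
Your argument is correct and follows exactly the paper's route: the paper's proof also consists of applying convergence to the neighborhoods $(a-\tfrac{1}{m}t,a+\tfrac{1}{m}t)$, merely leaving implicit the dictionary-order computation that these intervals equal $\{a+ct\mid c\in\R,\ |c|<\tfrac{1}{m}\}$, which you spell out. No discrepancies.
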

The converse of the above lemma is trivially true.
\begin{proof}
Assume that the sequence $(a_{n})_{n\in\N}$ converges to $a\in\FR$
in the order topology. The result follows directly by considering
the order open neighborhoods $(a-\frac{1}{m}t,a+\frac{1}{m}t)$ of
$a$ for $m\in\Z^{>0}$.\end{proof}
\begin{prop}
Let $(f_{n}:U\ra\R)_{n\in\N}$ be a sequence of ordinary smooth functions
from an open subset $U\subseteq\R^{m}$. If the sequence $(\Ff_{n}:\FU\ra\FR)_{n\in\N}$
converges uniformly in the order topology, then there exists $N\in\N$
such that for every $n>N$, we have $f_{n}=f_{N}$.
\end{prop}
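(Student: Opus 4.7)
The plan is to exploit the fact that in the order topology, the neighborhood $(-t,t)$ of $0$ is extremely thin --- by the dictionary-order argument underlying Lemma~\ref{lem:ptwise-convergent-order}, it contains only elements of the form $bt$ with $b\in(-1,1)_{\R}$ --- and to pair this with the observation that the Fermat extension $\Ff_{n}$ restricts to $f_{n}$ on real points. Concretely, uniform convergence will force all the $\Ff_{n}$ (for $n$ past a single uniform index) to agree with the limit $f$, and hence with each other, on standard points of $U$.

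First I would apply the definition of uniform convergence to $T=(-t,t)$ to obtain $N\in\N$ such that for every $n>N$ and every $x\in\FU$,
\[
\Ff_{n}(x)-f(x)\in(-t,t).
\]
For any $n,m>N$ and $x\in\FU$, subtracting and using that $\FR$ is an ordered group gives $\Ff_{n}(x)-\Ff_{m}(x)\in(-2t,2t)$. The same dictionary-order analysis that drives Lemma~\ref{lem:ptwise-convergent-order} shows that any element of $(-2t,2t)$ must have trivial standard part and no nonzero coefficients on $t^{b}$ for $0<b<1$, i.e.\ must be of the form $c(x)\cdot t$ with $c(x)\in\R$. In particular $\Ff_{n}(x)$ and $\Ff_{m}(x)$ share the same standard part for all $n,m>N$ and all $x\in\FU$.

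Next I would specialize to $x=i_{U}(u)$ for $u\in U$. Because $i_{U}(u)$ is represented by the constant little-oh polynomial $u$, one has $\Ff_{n}(i_{U}(u))=f_{n}(u)\in\R\subset\FR$, and similarly for $m$. Therefore $\Ff_{n}(u)-\Ff_{m}(u)=f_{n}(u)-f_{m}(u)$ is a real number equal to $c(u)\cdot t$ for some $c(u)\in\R$; the only real number with trivial standard part being $0$, this forces $f_{n}(u)=f_{m}(u)$. Since this holds for every $u\in U$, we conclude $f_{n}=f_{m}$ on $U$ for all $n,m>N$, and re-indexing $N\mapsto N+1$ delivers the statement.

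I do not foresee any substantive obstacle: all the content is packaged in the tightness of the interval $(-t,t)$ in the order topology, together with the trivial fact that $\Ff_{n}$ agrees with $f_{n}$ on the standard copy $U\hookrightarrow\FU$. It is however worth emphasising why the conclusion is strictly stronger than its $\omega$-topology counterpart proved just above: the order topology restricts to the \emph{discrete} topology on $\R$, so no analogue of the real convergent offset sequence $(a_{n})$ appearing in the $\omega$-case can possibly survive here.
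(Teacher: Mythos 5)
Your proof is correct and takes essentially the same route as the paper: uniform convergence tested against the order-neighborhood $(-t,t)$ forces the differences $\Ff_{n}(x)-\Ff_{m}(x)$ to lie in $\R\cdot t$ (the dictionary-order computation behind Lemma~\ref{lem:ptwise-convergent-order}), and taking standard parts at real points $u\in U\subseteq\FU$ gives $f_{n}=f_{m}$. The paper compresses this into an appeal to that lemma plus ``taking the standard part,'' but the underlying argument is identical.
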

The converse of the above proposition is trivially true.
\begin{proof}
By the previous lemma and the definition of uniform convergence, we
know that there exists $N\in\N$ such that for any $n>N$ and any
$u\in\FU$, $\Ff_{n}(u)$ and $\Ff_{N}(u)$ only differ from the coefficients
of $t$ in their decomposition. In particular, $f_{n}=f_{N}$ for
all $n>N$, by taking the standard part.
\end{proof}
As \cite[Theorem~5]{GW}, a similar uniqueness theorem holds for the
order topology:
\begin{thm}
\label{thm:uniqueness-order}Let $A$ be an arbitrary subset of $\FR^{n}$,
and let $f,g:A\ra\FR$ be quasi-standard smooth functions. If $f(x)=g(x)$
for all $x$ in a dense subset $B$ of $A$ in the order topology,
then $f=g$.\end{thm}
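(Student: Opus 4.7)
The plan is to replace $f, g$ by their difference $h := f - g$, a quasi-standard smooth function vanishing on $B$, and to show that $h(a) = 0$ for every $a \in A$. Fix such an $a$. Proposition~\ref{prop:qscoo} furnishes an open neighborhood $U \subseteq \R^{n}$ of $\sta$, ordinary smooth $\alpha_{0}, \ldots, \alpha_{m} : U \to \R$, and exponents $0 < a_{1} < \cdots < a_{m} \leq 1$ such that
\[
h(x) = \Falpha_{0}(x) + \sum_{i=1}^{m} \Falpha_{i}(x) \cdot t^{a_{i}} \quad \text{for all } x \in A \cap \FU.
\]

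First I would use first countability of the order topology on $\FR^{n}$ (a finite product of the first-countable order topology on $\FR$) together with the density of $B$ in $A$ to extract a sequence $(b_{k}) \subseteq B$ with $b_{k} \to a$ in the order topology. The same analysis of basic neighborhoods as in Lemma~\ref{lem:ptwise-convergent-order}, applied coordinatewise, forces $b_{k} - a = c_{k} t$ for all sufficiently large $k$, with $c_{k} \in \R^{n}$ and $\|c_{k}\|_{\infty} \to 0$; in particular $b_{k} \in A \cap \FU$ eventually, so the local expression above applies to $b_k$.

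The central computation is a Taylor expansion of each $\Falpha_{i}$ at $a$, exploiting the nilpotency $(c_{k} t)^{2} = 0$. The gradient $\nabla \Falpha_{i}(a)$ a priori lies in $\FR^{n}$, but multiplication by the factor $t$ kills its infinitesimal part (since $t^{b} = 0$ in $\FR$ for $b > 1$), yielding
\[
\Falpha_{i}(a + c_{k} t) = \Falpha_{i}(a) + t \, \langle c_{k}, \nabla \alpha_{i}(\sta) \rangle.
\]
After multiplying by $t^{a_{i}}$ and summing, every contribution with $i \geq 1$ picks up a factor $t^{1 + a_{i}}$ with $1 + a_{i} > 1$, which vanishes in $\FR$. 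What survives is
\[
0 = h(b_{k}) = h(a) + t \, \langle c_{k}, \nabla \alpha_{0}(\sta) \rangle,
\]
so $h(a) = -\langle c_{k}, \nabla \alpha_{0}(\sta) \rangle \cdot t$ is a real multiple of $t$. Uniqueness of the $\FR$-decomposition (equivalently, injectivity of $r \mapsto r t$) then forces the real number $\langle c_{k}, \nabla \alpha_{0}(\sta) \rangle$ to be independent of $k$, and letting $k \to \infty$ in $c_{k} \to 0$ makes this common value zero. Hence $h(a) = 0$.

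The step I expect to be the main obstacle is the algebraic bookkeeping in the Taylor expansion: one must track carefully which products of nilpotent infinitesimals survive modulo the relations $t^{b} = 0$ for $b > 1$, and recognise that only the gradient of $\alpha_{0}$ contributes (the gradients of $\alpha_{i}$ for $i \geq 1$ are absorbed into the ideal $t^{1 + a_{i}} \FR = 0$). Once that cancellation is verified, the rest is a routine combination of first countability, coordinatewise use of Lemma~\ref{lem:ptwise-convergent-order}, and uniqueness of the $\FR$-decomposition.
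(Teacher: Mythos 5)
Your proposal is correct and follows essentially the same route as the paper: both reduce to a single function vanishing on $B$, use the order-topology neighborhood base to produce approximants of the form $a+c_k t$ with $c_k\to 0$ in $\R^n$, Taylor-expand with the nilpotent increment $c_k t$ so that only the real gradient of the standard-part data survives, and conclude by letting $c_k\to 0$. The only cosmetic difference is that you work from the normal form of Proposition~\ref{prop:qscoo} rather than directly from the parametrized expression $\Falpha(p,x)$, which changes nothing essential.
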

\begin{proof}
Without loss of generality, we may assume that $g=0$. By definition
of quasi-standard smooth function, assume that $f(x)=\Falpha(p,x)$
for $x\in A\cap\FU$, where $U$ is an open subset of $\R^{n}$, $W$
is an open subset of a Euclidean space, $\alpha:W\times U\ra\R$ is
an ordinary smooth function, and $p\in\FW$ is a fixed parameter.
By definition of the order topology on Fermat reals, for any $x_{0}\in(A\setminus B)\cap\FU$,
there exists a sequence $(a_{i})_{i\in\Z^{>0}}$ in $\R^{n}$ converging
to $0$, such that $x_{i}:=x_{0}+a_{i}t\in B$. By Taylor's formula
with nilpotent increments, one checks that 
\[
f(x_{0})=f(x_{0})-f(x_{i})=-\sum_{j=1}^{n}\frac{\partial\alpha}{\partial x_{j}}(\stp,\stx_{0})\cdot a_{ij}t
\]
 for every $i\in\Z^{>0}$, where $a_{ij}$ is the $j^{th}$-component
of $a_{i}$, which implies that $f(x_{0})=0$, since $a_{ij}\ra0$
as $i\ra\infty$. 
\end{proof}

\subsection{The Euclidean topology}

Note that as an $\R$-vector space, $\FR^{n}$ can be viewed as a
linear subspace of $\R^{n}\times(\R^{n})^{(0,1]_{\R}}$, consisting
of elements $(x,y)\in\R^{n}\times(\R^{n})^{(0,1]_{\R}}$ such that
$y_{i}=0\in\R^{n}$ for all except finitely many $i\in(0,1]_{\R}$.
So we can write elements in $\FR^{n}$ as $x=\stx+\sum_{i\in(0,1]_{\R}}\alpha_{i}\cdot t^{i}$
(called the \emph{quasi-decomposition} of $x\in\FR^{n}$), and remembering
that all $\alpha_{i}=0\in\R^{n}$ except for finitely many $i\in(0,1]_{\R}$.
The notion of quasi-decomposition here is consistent with that in
Section~\ref{sec:Basics}.
\begin{defn}
\label{de:Euclidean} Let $x=\stx+\sum_{i\in(0,1]_{\R}}\alpha_{i}\cdot t^{i}$
and $y=\sty+\sum_{i\in(0,1]_{\R}}\beta_{i}\cdot t^{i}$ be quasi-decomposition
of two elements in $\FR^{n}$. Recall that only finitely many $\alpha_{i}$
and $\beta_{i}$'s are non-zero. We define 
\[
\langle x,y\rangle=\langle\stx,\sty\rangle_{\R}+\sum_{i\in(0,1]_{\R}}\langle\alpha_{i},\beta_{i}\rangle_{\R},
\]
where $\langle\blank,\blank\rangle_{\R}$ denotes the standard inner
product on $\R^{n}$.\end{defn}
\begin{lem}
$\langle\blank,\blank\rangle:\FR^{n}\times\FR^{n}\ra\R$ defined above
is an inner product on the infinite-dimensional $\R$-vector space
$\FR^{n}$. It makes $\FR^{n}$ into a topological $\R$-vector space,
but not a Hilbert space.
\end{lem}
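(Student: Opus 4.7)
The plan is to check the inner product axioms, verify the vector space topology, and then exhibit an explicit Cauchy sequence to rule out completeness; the last point is the only one requiring any idea. First, well-definedness of $\langle\blank,\blank\rangle$ follows from the fact that any two quasi-decompositions of the same element of $\FR^{n}$ differ only by zero coefficients (by uniqueness of the decomposition recalled in Section~\ref{sec:Basics}), which contribute $0$ to the sum. Bilinearity and symmetry are inherited coordinatewise from $\langle\blank,\blank\rangle_{\R}$. Positive-definiteness reads
\[
\langle x,x\rangle = \|\stx\|^{2}+\sum_{i\in(0,1]_{\R}}\|\alpha_{i}\|^{2},
\]
which vanishes exactly when $\stx=0$ and every $\alpha_{i}=0$, i.e.\ when $x=0$. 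That $\FR^{n}$ is infinite-dimensional over $\R$ follows from the same uniqueness: the uncountable family $\{1\}\cup\{t^{i}\mid i\in(0,1]_{\R}\}$ is $\R$-linearly independent inside $\FR\subseteq\FR^{n}$.

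Next, I set $\|x\|:=\sqrt{\langle x,x\rangle}$. The standard ``complete the square'' argument gives Cauchy--Schwarz and hence the triangle inequality; together with the trivial identity $\|\lambda x\|=|\lambda|\|x\|$ this makes $\|\cdot\|$ a norm, so the induced topology is metric. Continuity of $+\colon\FR^{n}\times\FR^{n}\ra\FR^{n}$ and of scalar multiplication $\R\times\FR^{n}\ra\FR^{n}$ then follow from the standard estimates
\[
\|(x+y)-(x'+y')\|\leq\|x-x'\|+\|y-y'\|,\qquad \|\lambda x-\lambda'x'\|\leq|\lambda-\lambda'|\|x\|+|\lambda'|\|x-x'\|,
\]
so $\FR^{n}$ is indeed a topological $\R$-vector space.

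Finally, to see that $\FR^{n}$ is not complete, the first-coordinate embedding $\FR\hookrightarrow\FR^{n}$ is isometric with continuous left inverse given by coordinate projection, so it suffices to exhibit a Cauchy sequence in $\FR$ with no limit. Pick distinct exponents $a_{k}:=\tfrac{1}{k+1}\in(0,1)_{\R}$ for $k\in\Z^{>0}$ and put
\[
x_{n}:=\sum_{k=1}^{n}\tfrac{1}{k}t^{a_{k}}\in\FR.
\]
For $m<n$ one has $\|x_{n}-x_{m}\|^{2}=\sum_{k=m+1}^{n}1/k^{2}$, so $(x_{n})$ is Cauchy. Suppose some $y=\sty+\sum_{j}\beta_{j}t^{b_{j}}\in\FR$ were its limit; since the decomposition of $y$ has finite support, I can pick $K$ so large that no $b_{j}$ coincides with any $a_{k}$ for $k\geq K$. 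Then for every $n\geq K$ the coefficient of $t^{a_{K}}$ in $y-x_{n}$ is $-1/K$, forcing $\|y-x_{n}\|^{2}\geq 1/K^{2}>0$, which contradicts $\|y-x_{n}\|\to 0$. The main obstacle is precisely this finite-support bookkeeping: the space only admits finite linear combinations of the $t^{i}$'s while the inner product formally ranges over all of $(0,1]_{\R}$, and one must turn the mismatch into a quantitative lower bound on $\|y-x_{n}\|$.
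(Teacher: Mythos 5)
Your proof is correct and follows essentially the same route as the paper: the axioms and the topological-vector-space structure are verified directly, and non-completeness is shown by exhibiting an explicit Cauchy sequence of partial sums $\sum_k c_k t^{a_k}$ with square-summable coefficients whose limit would require infinite support, impossible in $\FR$ (the paper uses $\sum_{i=1}^{m}\frac{1}{i!}t^{1/i}$ where you use $\sum_{k=1}^{n}\frac{1}{k}t^{1/(k+1)}$, an immaterial difference). Your finite-support argument ruling out a limit $y$ is exactly the point the paper leaves as ``easy to see.''
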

We call this inner product the \emph{Euclidean inner product} on $\FR^{n}$,
and the topology induced by it the \emph{Euclidean topology}. It is
clear that the Euclidean topology is strictly finer than the Fermat
topology, but strictly coarser than both the $\omega$- and the order
topologies. Moreover, the restriction of the Euclidean inner product
and the Euclidean topology to $\R^{n}$ is the standard inner product
and the standard topology.
\begin{proof}
It is straightforward to show that $\langle\blank,\blank\rangle$
is an inner product on $\FR^{n}$, and $\FR^{n}$ is a topological
$\R$-vector space in the Euclidean topology. To see that the Euclidean
topology is not complete, let $n=1$ and take $a_{m}=\sum_{i=1}^{m}\frac{1}{i!}t^{1/i}\in\FR$.
It is easy to see that the sequence $(a_{m})_{m\in\N}$ is Cauchy,
but it has no limit in $\FR$ with respect to the Euclidean topology.
\end{proof}
The Euclidean topology also does not behave well with quasi-standard
smooth functions:
\begin{rem}
Let $\|\blank\|$ be the norm induced by the Euclidean inner product
$\langle\blank,\blank\rangle$ on $\FR$. In general, there is no
definite inequality relating $\|xy\|$ and $\|x\|\cdot\|y\|$ for
$x,y\in\FR$. For example, 
\begin{enumerate}[leftmargin=*,label=(\roman*),align=left ]
\item $\|t\cdot t\|=0<1=\|t\|\cdot\|t\|$; 
\item $\|(1+t^{1/2})\cdot t^{1/2}\|=\sqrt{2}=\|1+t^{1/2}\|\cdot\|t^{1/2}\|$; 
\item $\|(1+t^{1/2})^{2}\|=\sqrt{6}>2=\|1+t^{1/2}\|^{2}$. 
\end{enumerate}
As a consequence, one can show that the multiplication map $\FR\times\FR\ra\FR$
is not continuous when $\FR$ is equipped with the Euclidean topology.
This is because for any $\delta\in\R_{>0}$, there exist $x_{n},y_{n}\in B_{\delta}(0;\langle\blank,\blank\rangle)$
such that $\lim_{n\ra\infty}\|x_{n}\cdot y_{n}\|=\infty$. In fact,
one can take $x_{n}=y_{n}=\frac{\delta}{\sqrt{n+1}}\sum_{i=1}^{n}t^{1/2^{i}}$.\end{rem}
\begin{thm}
The Cauchy completion of $\FR$ with respect to the Euclidean inner
product is the linear subspace $V$ of $\R\times\R^{(0,1]_{\R}}$
consisting of elements of the form $x=\stx+\sum_{i\in(0,1]_{\R}}\alpha_{i}\cdot t^{i}$
with all $\alpha_{i}=0$ except for countably many $i\in(0,1]_{\R}$,
and $\sum_{i\in(0,1]_{\R}}\alpha_{i}^{2}<\infty$. The Euclidean inner
product on $\FR$ extends canonically to an inner product on $V$,
which makes $V$ a (non-separable) Hilbert space.\end{thm}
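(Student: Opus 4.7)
The plan is to identify $V$ with the Hilbert space direct sum $\R \oplus \ell^{2}((0,1]_{\R})$, verify that $V$ is complete under the natural extension of the Euclidean inner product, and then establish that $\FR$ is a dense subspace. First I would extend $\langle\blank,\blank\rangle$ from Definition~\ref{de:Euclidean} to $V$ by the same formula
\[
\langle x,y\rangle = \langle\stx,\sty\rangle_{\R} + \sum_{i\in(0,1]_{\R}}\alpha_{i}\beta_{i},
\]
noting that for $x,y\in V$ the (at most countable) joint support lets us reduce to a sum over $\N$, and the Cauchy--Schwarz inequality for $\ell^{2}$ shows the series converges absolutely. Positive definiteness and symmetry are inherited coordinatewise. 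Thus $V$ is a pre-Hilbert space, and the inclusion $\FR\hookrightarrow V$ is an isometric embedding of inner product spaces.

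Next I would show $V$ is complete. Take a Cauchy sequence $(x^{(n)})_{n\in\N}$ in $V$, write $x^{(n)}=\stx^{(n)}+\sum_{i}\alpha_{i}^{(n)}t^{i}$. The estimates $|\stx^{(n)}-\stx^{(m)}|\le\|x^{(n)}-x^{(m)}\|$ and $|\alpha_{i}^{(n)}-\alpha_{i}^{(m)}|\le\|x^{(n)}-x^{(m)}\|$ give Cauchy sequences of reals, which converge to some $\stx^{\infty}$ and $\alpha_{i}^{\infty}$ respectively. The key observation is that if $\alpha_{i}^{\infty}\neq 0$, then $\alpha_{i}^{(n)}\neq 0$ for all sufficiently large $n$, so
\[
\{i\in(0,1]_{\R}\mid\alpha_{i}^{\infty}\neq 0\}\subseteq\bigcup_{n\in\N}\mathrm{supp}(x^{(n)}),
\]
which is a countable union of countable sets and hence countable. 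For any finite $F\subset(0,1]_{\R}$, passing to the limit in the inequality $\sum_{i\in F}(\alpha_{i}^{(n)}-\alpha_{i}^{(m)})^{2}\le\|x^{(n)}-x^{(m)}\|^{2}$ as $m\to\infty$ and then taking the supremum over $F$ (a Fatou-type argument) shows $x^{\infty}:=\stx^{\infty}+\sum_{i}\alpha_{i}^{\infty}t^{i}\in V$ and $\|x^{(n)}-x^{\infty}\|\to 0$.

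Now I would verify density: given $x=\stx+\sum_{i\in S}\alpha_{i}t^{i}\in V$ with $S$ countable, enumerate $S=\{i_{1},i_{2},\ldots\}$ and set $y_{n}=\stx+\sum_{k=1}^{n}\alpha_{i_{k}}t^{i_{k}}\in\FR$; then $\|x-y_{n}\|^{2}=\sum_{k>n}\alpha_{i_{k}}^{2}\to 0$ as the tail of a convergent series. Together with completeness this identifies $V$ as the Cauchy completion of $\FR$. Finally, the uncountable family $\{t^{i}\}_{i\in(0,1]_{\R}}$ is orthonormal in $V$ with pairwise distance $\sqrt{2}$, so $V$ contains no countable dense subset; hence it is non-separable.

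The only subtle step is the countability of $\mathrm{supp}(x^{\infty})$ and the verification that $x^{\infty}\in V$ --- everything else is the routine adaptation of the construction of $\ell^{2}$ over an uncountable index set. Writing $V$ as $\R\oplus\ell^{2}((0,1]_{\R})$ makes all of this transparent, and it is the viewpoint I would adopt throughout the proof.
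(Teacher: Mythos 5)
Your proposal is correct and fills in exactly the standard argument that the paper leaves implicit (its proof is just ``This is straightforward''): identifying $V$ with $\R\oplus\ell^{2}((0,1]_{\R})$, checking completeness coordinatewise, noting that finite truncations from $\FR$ are dense, and using the uncountable orthonormal family $\{t^{i}\}_{i\in(0,1]_{\R}}$ for non-separability. Nothing to flag.
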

\begin{proof}
This is straightforward.\end{proof}
\begin{rem}
We no longer have the nilpotency of infinitesimals and the dictionary
order in $V$.
\end{rem}
In order to state the next theorem, we need the following definition:
\begin{defn}
A sequence $(f_{n}:U\ra\R)_{n\in\N}$ of ordinary smooth functions
defined on an open subset $U$ of $\R^{m}$ is called \emph{pointwise
Taylor convergent}, if for every $k\in\N^{m}$, the sequence $(\frac{\partial^{|k|}f_{n}}{\partial x^{k}})_{n\in\N}$
is pointwise convergent.\end{defn}
\begin{example}
\label{ex:ptwise-Taylor-convergence} Let $f_{n}:(-1,1)_{\R}\ra\R$
be defined by $f_{n}(x)=x^{n}$. Then for any $k\in\N$, we have 
\[
f_{n}^{(k)}(x)=\begin{cases}
0, & \textrm{if \ensuremath{n<k}}\\
\frac{n!}{(n-k)!}x^{n-k}, & \textrm{otherwise.}
\end{cases}
\]
So for any $x\in(-1,1)_{\R}$ and any $k\in\N$, $\lim_{n\ra\infty}f_{n}^{(k)}(x)=0$,
and hence the sequence $(f_{n})_{n\in\N}$ is pointwise Taylor convergent.

Note that the ordinary smooth function $f_{n}$ above can be defined
on the whole $\R$, and the sequence $(f_{n})_{n\in\N}$ is pointwise
convergent on $(-1,1]_{\R}$, but the sequence of the Fermat extension
$(\Ff_{n})_{n\in\N}$ is not pointwise convergent in the Euclidean
topology at $1+\epsilon$ for any non-zero infinitesimal $\epsilon$.
\end{example}
Here is an example of a pointwise convergent sequence which is not
pointwise Taylor convergent:
\begin{example}
Let $f_{n}:\R\ra\R$ be defined by $f_{n}(x)=\frac{1}{n}\sin(nx)$.
Then $f_{n}'(x)=\cos(nx)$. So for any $x\in\R$, $\lim_{n\ra\infty}f_{n}(x)=0$,
but $\lim_{n\ra\infty}f_{n}'(x)$ does not exist in general. Hence,
the sequence $(f_{n})_{n\in\N}$ is pointwise convergent, but not
pointwise Taylor convergent.\end{example}
\begin{thm}
\label{thm:ptwise-convergence-for-Euclidean} Let $(f_{n}:U\ra\R)_{n\in\N}$
be a sequence of ordinary smooth functions defined on an open subset
$U$ of $\R^{m}$. Then $(f_{n})_{n\in\N}$ is pointwise Taylor convergent
if and only if the sequence of the Fermat extension $(\Ff_{n}:\FU\ra\FR)_{n\in\N}$
is pointwise convergent in the Euclidean topology.
\end{thm}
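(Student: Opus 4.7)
Both directions rest on Taylor's formula with nilpotent increments recalled in Section~\ref{sec:Basics} together with the observation that convergence in the Euclidean topology on $\FR$ is precisely the coordinate-wise convergence of the quasi-decomposition.

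\textit{Easy direction} $(\Rightarrow)$. Fix $x = \stx + \delta x \in \FU$. Taylor's formula writes $\Ff_n(x)$ as the \emph{finite} sum
$$\Ff_n(x) \;=\; \sum_{|k|\le M} \frac{1}{k!}\, \frac{\partial^{|k|} f_n}{\partial x^k}(\stx)\,(\delta x)^k,$$
where $M$ depends on $x$ but not on $n$, and each vector $(\delta x)^k\in\FR$ depends only on $x$. Since $\FR$ is a topological $\R$-vector space in the Euclidean topology, scalar multiplication and addition are continuous; hence pointwise Taylor convergence $\partial^k f_n(\stx) \to c_k\in\R$ at the real point $\stx\in U$ immediately yields $\Ff_n(x) \to \sum_k (c_k/k!)(\delta x)^k$ in the Euclidean topology.

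\textit{Harder direction} $(\Leftarrow)$. Fix $x_0 \in U$ and a multi-index $k_0 \in \N^m$; I want to isolate $\partial^{k_0} f_n(x_0)$ as a single coefficient of the quasi-decomposition of $\Ff_n$ at a cleverly chosen point. Pick distinct primes $p_1,\dots,p_m$, all strictly larger than $|k_0|$, and set $\delta x := (t^{1/p_1},\dots,t^{1/p_m})$; then $x := x_0 + \delta x \in \FU$ and Taylor's formula reads
$$\Ff_n(x) \;=\; \sum_{k\,:\,\sum_j k_j/p_j \,\le\, 1} \frac{1}{k!}\, \partial^k f_n(x_0)\, t^{\sum_j k_j/p_j},$$
because $(\delta x)^k = t^{\sum_j k_j/p_j}$ and $t^{a}=0$ in $\FR$ whenever $a>1$. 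Let $E := \sum_j k_{0,j}/p_j$; the case $|k_0|=0$ is trivial, and for $|k_0|\ge 1$ the strict inequalities $k_{0,j}/p_j < k_{0,j}/|k_0|$ (for the indices with $k_{0,j}>0$) force $E<1$. I claim $k_0$ is the only multi-index appearing in the sum with $\sum_j k_j/p_j = E$. Indeed, $\sum_j(k_j-k_{0,j})/p_j = 0$ multiplied by $\prod_j p_j$ and reduced modulo $p_j$ (using coprimality of $p_j$ with $\prod_{i\ne j}p_i$) gives $k_j\equiv k_{0,j}\pmod{p_j}$; since $\sum_j k_j/p_j = E<1$ forces each $k_j<p_j$ and $k_{0,j}\le|k_0|<p_j$, the congruence collapses to $k_j=k_{0,j}$, hence $k=k_0$. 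Therefore the coefficient of $t^E$ in the quasi-decomposition of $\Ff_n(x)$ is exactly $\tfrac{1}{k_0!}\partial^{k_0} f_n(x_0)$, and convergence of $\Ff_n(x)$ in the Euclidean topology forces this coefficient to converge in $\R$.

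\textit{Main obstacle.} The only delicate step is the number-theoretic design of $\delta x$: the exponents $1/p_j$ must be chosen so that the Taylor monomials $(\delta x)^k$ produce pairwise distinct, non-vanishing powers of $t$ below exponent $1$, so that each partial derivative can be read off as a single coefficient of the quasi-decomposition. Distinct primes $p_j > |k_0|$ handle this cleanly via the modular-arithmetic argument above; everything else reduces to linearity and continuity in the Euclidean topology.
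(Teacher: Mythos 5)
Your proof is correct and follows the same overall strategy as the paper's: the forward direction is the finiteness of the Taylor sum plus continuity of the vector-space operations in the Euclidean topology, and the reverse direction recovers $\partial^{k_0}f_n(x_0)$ by evaluating $\Ff_n$ at $x_0$ plus an infinitesimal whose component exponents are chosen so that the target multi-index produces a power of $t$ not matched by any other multi-index in the truncated Taylor sum. The one genuine difference is how that exponent-separation lemma is established: the paper normalizes so that the target monomial lands at $t^{1}$ and proves the existence of suitable exponents $a_{l}$ with $\sum_{l}j_{l}a_{l}=1\neq\sum_{l}s_{l}a_{l}$ by an induction-and-perturbation argument (the remark following the theorem), whereas you take exponents $1/p_{j}$ for distinct primes $p_{j}>|k_{0}|$ and obtain uniqueness by clearing denominators and reducing modulo each $p_{j}$; your choice is fully explicit and the verification is shorter and self-contained, while the paper's argument avoids arithmetic at the cost of a separate existence remark. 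One small caution: your opening claim that Euclidean convergence is ``precisely'' coordinate-wise convergence of quasi-decompositions is false in the unstated direction (e.g.\ $t^{1/n}\to0$ coordinate-wise while $\|t^{1/n}\|=1$ for all $n$), a point the paper itself flags in a footnote; but your argument only ever uses the true implication --- Euclidean convergence forces each coefficient to converge, since extracting a coefficient is a $1$-Lipschitz functional --- so nothing in the proof breaks.
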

A similar statement does not hold if we change the Euclidean topology
to the $\omega$-topology or the order topology; see Example~\ref{ex:ptwise-Taylor-convergence},
Lemmas~\ref{lem:ptwise-convergent-omega} and~\ref{lem:ptwise-convergent-order}.
\begin{proof}
For any $x\in\FU$, write $x=\stx+\delta x$ for the standard and
the infinitesimal parts of $x$, and write $k$ for the integer part
of $\omega(x)$. Then we have 
\[
\Ff_{n}(x)=\sum_{i\in\N^{m},|i|\leq k}\frac{1}{i!}\frac{\partial^{|i|}f}{\partial x^{i}}(\stx)\cdot(\delta x)^{i}.
\]

($\Leftarrow$) For $y\in\FU$, write $\Ff_{n}^{w}(y)$ for the coefficient
of $t^{w}$ of the quasi-decomposition of $\Ff_{n}(y)$. By the definition
of the Euclidean topology, we know that the fact that $\lim_{n\ra\infty}\Ff_{n}(y)$
exists implies that $\lim_{n\ra\infty}{^{\circ}(\Ff_{n}(y))}$ and
$\lim_{n\ra\infty}\Ff_{n}^{w}(y)$ exist for each $w\in(0,1]_{\R}$.%
\footnote{As a warning, the converse of this statement is not necessarily true,
unless the sequence $(\Ff_{n}(y))_{n\in\N}$ is uniformly bounded,
in the sense that there exists a finite subset $A$ of $(0,1]_{\R}$
such that $\Ff_{n}^{w}(y)=0$ for every $w\in(0,1]_{\R}\setminus A$
and for every large enough $n$.%
} The conclusion then follows from the facts that for any $x\in U$,
$\lim_{n\ra\infty}f_{n}(x)=\lim_{n\ra\infty}\Ff_{n}(x)$, and that
$\lim_{n\ra\infty}\frac{\partial^{|j|}f_{n}}{\partial x^{j}}(x)$
is $j!$ times the coefficient of $t$ of the quasi-decomposition
of $\lim_{n\ra\infty}\Ff_{n}(x+\sum_{l=1}^{m}t^{a_{l}}\cdot e_{l})$
for each $j\in\N_{\geq0}^{m}\setminus\{0\}$, where $e_{1},\ldots,e_{m}$
is the standard basis for $\R^{m}$, and $(a_{1},\ldots,a_{m})=(a_{1}(j),\ldots,a_{m}(j))\in\R_{>0}^{m}$
are suitably chosen such that $\sum_{l=1}^{m}j_{l}a_{l}=1$, and for
any $s\in\N^{m}$ with $s\neq j$, we have $\sum_{l=1}^{m}s_{l}a_{l}\neq1$.
The existence of such $a_{l}$'s is guaranteed by the following remark.

($\Rightarrow$) Since the sequence $(f_{n})_{n\in\N}$ of ordinary
smooth functions is pointwise Taylor convergent, $\lim_{n\ra\infty}\frac{\partial^{|i|}f_{n}}{\partial x^{i}}(\stx)$
exists for each $i\in\N^{m}$, and hence $\lim_{n\ra\infty}\Ff_{n}(x)$
exists.\end{proof}
\begin{rem}
In this remark, we show that for any fixed $j\in\N^{m}\setminus\{0\}$,
there exists $a\in\R_{>0}^{m}$ such that for any $s\in\N^{m}$ with
$s\neq j$, we have $\sum_{l=1}^{m}s_{l}a_{l}\neq\sum_{l=1}^{m}j_{l}a_{l}$.
We prove this by induction on $m$. It is clearly true for $m=1$.
Assume that we have proved the statement for $m=k$, and now we consider
$m=k+1$. Note that we have a continuous function $\phi:\R_{>0}^{k+1}\times\R_{\geq0}^{k+1}\ra\R$
defined by $(b_{1},\ldots,b_{k+1},u_{1},\ldots,u_{k+1})\mapsto\sum_{l=1}^{k+1}u_{l}b_{l}$,
and $\phi({(1,\ldots,1)\times\N^{k+1}})\subseteq\N$. Our strategy
is to perturb $(1,\ldots,1)$ a bit to get the required $a$. So we
are left to show that there exists $c\in\R_{\geq0}^{k+1}$ such that
$\sum_{l=1}^{k+1}c_{l}s_{l}\neq\sum_{l=1}^{k+1}c_{l}j_{l}$ for every
$s\in\N_{\geq0}^{k+1}$ with $|s|=|j|$ and $s\neq j$. (Then $a$
defined by $a_{l}=1+\frac{c_{l}}{N}$ for some large enough $N\in\N$
is what we are looking for, by the continuity of the map $\phi$ and
the discreteness of $\N$ in $\R$.) Note that $|s|=|j|$ and $s\neq j$
imply that $(s_{1},\ldots,s_{k})\neq(j_{1},\ldots,j_{k})$. Now we
split into two cases: $(j_{1},\ldots,j_{k})\in\N^{k}\setminus\{0\}$
and $(j_{1},\ldots,j_{k})=(0,\ldots,0)$. For the first case, we get
the conclusion by the induction hypothesis and setting $c_{k+1}=0$.
For the second case, we get the conclusion by setting $(c_{1},\ldots,c_{k},c_{k+1})=(0,\ldots,0,1)$.
\end{rem}
Uniform convergence in the Euclidean topology for sequences of the
Fermat extension of ordinary smooth functions is also trivial:
\begin{prop}
Let $(f_{n}:U\ra\R)_{n\in\N}$ be a sequence of ordinary smooth functions
defined on a connected open subset $U$ of $\R^{m}$. If the sequence
of the Fermat extension $(\Ff_{n})_{n\in\N}$ converges uniformly
in the Euclidean topology, then there exist a convergent sequence
$(a_{n})_{n\in\N}$ in $\R$ and $N\in\N$ such that for every $n>N$,
we have $f_{n}=f_{N}+a_{n}$.
\end{prop}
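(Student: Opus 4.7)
My plan is to exploit the rigidity of the Euclidean norm under real scaling, in order to promote the uniform convergence hypothesis into the pointwise identity $\partial_l f_n = \partial_l f_N$ on $U$ for all large $n$, and then invoke connectedness of $U$ together with completeness of $\R$.

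First I would fix $\epsilon > 0$ and choose $N \in \N$ such that $\sup_{x \in \FU} \|\Ff_n(x) - \Ff_k(x)\| < \epsilon$ for all $n, k > N$. For each $x_0 \in U$, each index $l \in \{1, \dots, m\}$, and each $\lambda \in \R$, I would test this uniform bound at the Fermat point $x_0 + \lambda t \cdot e_l \in \FU$, where $e_l$ is the $l$-th standard basis vector of $\R^m$. Since $(\lambda t)^2 = 0$, Taylor's formula with nilpotent increments collapses to the two-term expression
\[
\Ff_n(x_0 + \lambda t\, e_l) - \Ff_k(x_0 + \lambda t\, e_l) = \bigl(f_n(x_0) - f_k(x_0)\bigr) + \lambda \bigl(\partial_l f_n(x_0) - \partial_l f_k(x_0)\bigr) \cdot t,
\]
whose squared Euclidean norm, read off from Definition~\ref{de:Euclidean}, is
\[
\bigl(f_n(x_0) - f_k(x_0)\bigr)^2 + \lambda^2 \bigl(\partial_l f_n(x_0) - \partial_l f_k(x_0)\bigr)^2 < \epsilon^2.
\]
Because this bound must hold simultaneously for every $\lambda \in \R$, the coefficient of $\lambda^2$ is forced to vanish, giving $\partial_l f_n(x_0) = \partial_l f_k(x_0)$ for all $n, k > N$, every $l$, and every $x_0 \in U$.

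At that point I would have $\nabla(f_n - f_N) \equiv 0$ on the connected open set $U$, so the classical constant-function theorem forces each difference $f_n - f_N$ to be a real constant, which I call $a_n$ for $n > N$ (and set $a_n := 0$ otherwise), yielding the desired decomposition $f_n = f_N + a_n$. To see that $(a_n)$ converges, I would evaluate the uniform convergence at a single fixed $x_0 \in U$: since $\Ff_n(x_0) = f_n(x_0) \in \R$ and the Euclidean topology restricts to the standard topology on $\R$, the sequence $\bigl(f_n(x_0)\bigr)_n$ is Cauchy, hence convergent in $\R$; therefore so is $a_n = f_n(x_0) - f_N(x_0)$.

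I expect the only substantive step to be the scaling argument with $|\lambda| \to \infty$: once the Taylor expansion along the one-parameter family $x_0 + \lambda t e_l$ reduces $\Ff_n - \Ff_k$ to an affine-in-$\lambda$ expression whose standard-part and $t^1$-coefficients are separated under the Euclidean inner product, the unboundedness of $\lambda$ inside a uniform bound forces the $t^1$-coefficient to vanish identically on $U$. The remaining ingredients, namely connectedness of $U$ and completeness of $\R$, then finish the argument with no further subtlety.
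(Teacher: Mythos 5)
Your proposal is correct and follows essentially the same route as the paper: the paper's (very terse) proof simply asserts that the uniform Cauchy estimate in the Euclidean norm ``could not hold if $\frac{\partial(f_{n}-f_{l})}{\partial x_{j}}(x_{0})\neq0$,'' and your scaling argument at $x_{0}+\lambda t\,e_{l}$ with $|\lambda|\ra\infty$, using that the standard part and the $t$-coefficient are orthogonal in the Euclidean inner product, is precisely the justification of that assertion, after which connectedness and completeness of $\R$ finish the argument as you describe. (One cosmetic point: your Cauchy estimate yields $\nabla(f_{n}-f_{k})=0$ only for $n,k>N$, so the reference index should be $N+1$ rather than $N$; this is a harmless relabeling.)
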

The converse of the above proposition is trivially true.
\begin{proof}
Since $(\Ff_{n})_{n\in\N}$ converges uniformly in the Euclidean topology,
for every open neighborhood $T$ of $0\in\FR$, there exists $N\in\N$
such that for every $n,l>N$, we have $\Ff_{n}(x)-\Ff_{l}(x)\in T$
for every $x\in\FU$. This could not hold if $\frac{\partial(f_{n}-f_{l})}{\partial x_{j}}(x_{0})\neq0$
for some $j=1,2,\ldots,m$ and some $x_{0}\in U$. The rest of the
proof is straightforward.
\end{proof}

\section{Some general results for Fermat reals}

In this section, we prove some results which hold for any topology
on Fermat reals. And recall from Definition~\ref{de:topology} that
by a topology on Fermat reals, we always mean an additive Hausdorff
topology on $\FR$, which then induces the product topology on $\FR^{n}$.

\subsection{The Euclidean topology is best for pointwise convergence}

From the previous section, we know that the pointwise convergence
of the sequence of the Fermat extension of ordinary smooth functions
in any natural topology on Fermat reals always imposes extra conditions
on the sequence of the original ordinary smooth functions. In fact,
this is a common phenomenon:
\begin{thm}
There is no topology $\tau$ (Definition~\ref{de:topology}) on Fermat
reals such that for every pointwise convergent sequence of ordinary
smooth functions $(f_{n}:U\ra\R)_{n\in\Z^{>0}}$, where $U$ is an
open subset of $\R^{m}$, the sequence of the Fermat extension $(\Ff_{n})_{n\in\Z^{>0}}$
is pointwise convergent in $\tau$.\end{thm}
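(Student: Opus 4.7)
The plan is to exhibit a single pointwise convergent sequence of ordinary smooth functions on $\R$ whose Fermat extensions fail to converge at the infinitesimal point $t\in\FR$ in \emph{any} additive Hausdorff topology on $\FR$. The underlying idea is that Taylor's formula with nilpotent increments makes $\Ff_n(t)$ record the first derivative $f_n'(0)$, so if I choose $(f_n)$ with $f_n\ra 0$ pointwise but $f_n'(0)$ diverging, the sequence $(\Ff_n(t))$ will be of the form $c_n\cdot t$ with $c_n\ra\infty$ — a sequence I will show cannot converge under the constraints of Definition~\ref{de:topology}.

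Concretely, I would take $U=\R$, $m=1$, and $f_n:\R\ra\R$ defined by $f_n(x)=\frac{1}{n}\sin(n^2 x)$. The bound $|f_n(x)|\leq \frac{1}{n}$ yields pointwise convergence $f_n\ra 0$. Since $\omega(t)=1$ forces $t^2=0$, Taylor expansion truncates after the linear term and gives
\[
\Ff_n(t)=f_n(0)+f_n'(0)\cdot t=nt.
\]

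Now suppose for contradiction that some additive Hausdorff topology $\tau$ on $\FR$ makes $(nt)_{n\in\Z^{>0}}$ converge to some $a\in\FR$. By additivity of $\tau$ (that is, continuity of subtraction in $\FR$), the shifted sequence $((n+1)t)_{n\in\Z^{>0}}$ also converges to $a$, so the difference sequence
\[
\bigl((n+1)t-nt\bigr)_{n\in\Z^{>0}}
\]
converges to $0$ in $\tau$. But this difference sequence is the constant sequence with value $t$, and in any Hausdorff topology a constant sequence has unique limit equal to its common value. Hence $t=0$ in $\FR$, contradicting $t\neq 0$.

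There is no substantial obstacle beyond finding an example in which evaluation at an infinitesimal point yields successive differences that are a nonzero constant; the only subtle point is the combined use of additivity (to get convergence of differences) and Hausdorffness (to force the constant value $t$ to equal $0$).
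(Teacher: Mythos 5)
Your proof is correct and follows essentially the same route as the paper's: there the example is $f_{n}(x)=\frac{1}{n}\sin(nx)$ evaluated at $\frac{\pi}{2}+x$ for $x$ a first-order infinitesimal, so that the linear Taylor coefficient $\cos(n\pi/2)$ oscillates, and additivity then forces every $\tau$-neighborhood of $0$ to contain the whole ideal $D$, contradicting Hausdorffness. Your variant (taking $\sin(n^{2}x)$, evaluating at $t$, and extracting the constant difference sequence $t$ which must converge to $0$) is the same idea -- pointwise convergence does not control first derivatives, and additivity plus Hausdorffness rule out the resulting divergence of the $t$-coefficient -- packaged with an equally valid, slightly more streamlined final contradiction.
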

\begin{proof}
We prove below that every additive topology $\tau$ satisfying the
above condition for $U=\R$ is not Hausdorff.

Let $f_{n}(x)=\frac{1}{n}\sin(nx)$. Then $(f_{n})_{n\in\Z^{>0}}$
pointwise converges to the constant function with value $0$. By assumption,
the sequence $(\Ff_{n})_{n\in\Z^{>0}}$ pointwise converges in $\tau$.
So, for any $x\in D:=\{x\in\FR\mid x^{2}=0\}$, we have 
\[
\Ff_{n}(\frac{\pi}{2}+x)=\frac{1}{n}\sin(\frac{n\pi}{2})+\cos(\frac{n\pi}{2})\cdot x.
\]
Therefore, $2x\in T$ for any $\tau$-open neighborhood $T$ of $0\in\FR$.
In other words, every $\tau$-open neighborhood of $0\in\FR$ contains
$D$, which is an ideal of the commutative unital ring $\FR$ (\cite[Theorem~23]{GK}).
Therefore, $\tau$ cannot be Hausdorff.
\end{proof}
Moreover, we have:
\begin{thm}
\label{thm:best}The Euclidean topology is a best topology (Definition~\ref{de:topology})
for pointwise convergence in the sense of Theorem~\ref{thm:ptwise-convergence-for-Euclidean}.\end{thm}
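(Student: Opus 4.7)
My plan is to read Theorem~\ref{thm:best} as asserting that the Euclidean topology is the finest additive Hausdorff topology $\tau$ on $\FR$ with the property that every pointwise Taylor convergent sequence $(f_n:U\ra\R)$ of ordinary smooth functions has pointwise $\tau$-convergent Fermat extension $(\Ff_n)$. For any such $\tau$, I would show that $\tau$ is coarser than the Euclidean topology, by showing that every Euclidean-convergent sequence $(y_n)$ in $\FR$ is also $\tau$-convergent to the same limit; by translation-invariance it suffices to handle $y_n\ra 0$.

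The core construction realizes ``bounded-support'' Euclidean-null sequences as Fermat extensions at a single point. Given $\{b_1<\cdots<b_k\}\subseteq(0,1]_\R$, choose $\alpha\in\R_{>0}$ such that each $b_j=r_j\alpha$ for some positive integer $r_j$, and set $x_0:=t^\alpha$. For a sequence $y_n = c_{n,0}+\sum_{j=1}^k c_{n,j}\cdot t^{b_j}$ with each $c_{n,j}\ra 0$ in $\R$ (equivalently, $y_n\ra 0$ in the Euclidean topology), I would take the polynomial $f_n(x):=c_{n,0}+\sum_{j=1}^k c_{n,j}\cdot x^{r_j}$, so that $\Ff_n(x_0)=c_{n,0}+\sum_j c_{n,j}\cdot t^{r_j\alpha}=y_n$. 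Since every coefficient of $f_n$ tends to $0$, every derivative $f_n^{(l)}$ tends pointwise to $0$ on $\R$, so $(f_n)$ is pointwise Taylor convergent to $0$; by the hypothesis on $\tau$, the sequence $(\Ff_n(x_0))=(y_n)$ is therefore $\tau$-convergent. To force the $\tau$-limit to be $0$, I would interleave with the zero function: set $g_{2n-1}:=f_n$ and $g_{2n}:=0$. Then $(g_n)$ is still pointwise Taylor convergent to $0$, so $(\Fg_n(x_0))=(y_1,0,y_2,0,\ldots)$ is $\tau$-convergent, and its constant-zero subsequence together with the Hausdorffness of $\tau$ pins the $\tau$-limit down to $0$. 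Hence $y_n\ra 0$ in $\tau$.

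This handles the case in which the $y_n$ have supports in a common finite subset of $(0,1]_\R$. The main obstacle, as I see it, is the passage from this fixed-support case to the general case: a Euclidean-null sequence in $\FR$ can have unbounded supports (for instance $y_n=\frac{1}{n}\cdot t^{a_n}$ for distinct $a_n\in(0,1]_\R$), and hence cannot be realized as $\Ff_n(x_0)$ for a single $x_0$. My plan is the contrapositive: if a $\tau$-neighborhood $T$ of $0$ contains no Euclidean $\epsilon$-ball, pick witnesses $y_n\in\FR$ with $\|y_n\|_E<1/n$ and $y_n\notin T$, then argue by pigeonhole, or by decomposing each $y_n$ into a bounded-support principal part plus a small tail and using the additivity of $\tau$, to pass to a subsequence whose supports lie in a common finite set; the fixed-support argument above then yields a contradiction. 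Making this last reduction rigorous, which requires care with both the Hausdorffness of $\tau$ and the specific structure of additive topologies on $\FR$, is the delicate piece of the argument.
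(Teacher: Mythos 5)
Your reading of ``best'' is not the one the paper intends, and unfortunately it is the reading under which the statement is false, so the plan cannot be completed. In the paper, Theorem~\ref{thm:best} is about the optimality of the \emph{hypothesis} of Theorem~\ref{thm:ptwise-convergence-for-Euclidean}: the proof shows that for every $k\in\N$ there is no additive Hausdorff topology $\tau$ for which pointwise convergence of only $f_{n},f_{n}',\ldots,f_{n}^{(k)}$ already forces pointwise $\tau$-convergence of $(\Ff_{n})$. (It takes a $2$-periodic $g$ with $g(x)=g(1-x)$, flat to order $k$ at $0$ with $g^{(k+1)}(0)=c$, sets $f_{n}(x)=n^{-(k+1)}g(nx)$, and evaluates at $1+a$ for $a\in D_{k+1}$; the values oscillate by elements of $\{x^{k+1}\mid x\in D_{k+1}\}$, so every $\tau$-neighborhood of $0$ must contain that whole set, contradicting Hausdorffness.) So ``best'' means: the Euclidean topology converges on exactly the pointwise Taylor convergent sequences, and no admissible topology can converge on a strictly larger class cut out by finitely many derivative conditions.

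Your reading --- that the Euclidean topology is the \emph{finest} additive Hausdorff topology for which pointwise Taylor convergence implies pointwise convergence of the extensions --- fails, and it fails exactly at the step you flag as delicate. For a fixed $x_{0}\in\FU$ the values $\Ff_{n}(x_{0})=\sum_{i\leq k}\frac{1}{i!}f_{n}^{(i)}(\stx_{0})(\delta x_{0})^{i}$ all lie in the fixed finite-dimensional subspace spanned by $1,\delta x_{0},\ldots,(\delta x_{0})^{k}$ and converge coordinatewise there; hence \emph{any} additive Hausdorff topology inducing the standard topology on each finite-dimensional subspace has the property in question. The locally convex direct sum topology on $\FR\cong\R\oplus\bigoplus_{i\in(0,1]_{\R}}\R\cdot t^{i}$ is such a topology; it is additive, Hausdorff, and finer than the Euclidean topology (every convex balanced absorbing set, in particular every Euclidean ball, is a neighborhood of $0$), yet strictly finer: your own test sequence $y_{n}=\frac{1}{n}t^{a_{n}}$ with distinct $a_{n}$ is Euclidean-null but does not converge to $0$ there (consider the neighborhood $\{\sum_{i}c_{i}t^{i}\mid\sum_{i}|c_{i}|/\epsilon_{i}<1\}$ with $\epsilon_{a_{n}}=\frac{1}{2n}$). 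So no pigeonhole or tail-splitting argument can reduce the unbounded-support case to the fixed-support case: the hypothesis on $\tau$ constrains neighborhoods of $0$ only through their traces on finite-dimensional subspaces. (Two smaller points: your choice of $\alpha$ with $b_{j}=r_{j}\alpha$ requires the exponents $b_{j}$ to be pairwise commensurable, which is not automatic --- one should instead realize each term $c_{n,j}t^{b_{j}}$ separately as the value at $t^{b_{j}}$ of $x\mapsto c_{n,j}x$ and add, using additivity of $\tau$; your interleaving trick for pinning the limit at $0$ is fine.)
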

\begin{proof}
We prove below that for any $k\in\N$, if an additive topology $\tau$
on Fermat reals has the property that for every sequence of ordinary
smooth functions $(f_{n}:\R\ra\R)_{n\in\Z^{>0}}$ such that $(f_{n}^{(i)})_{n\in\Z^{>0}}$
is pointwise convergent for each $i=0,1,\ldots,k$, the sequence of
the Fermat extension $(\Ff_{n})_{n\in\Z^{>0}}$ is pointwise convergent
in $\tau$, then $\tau$ cannot be Hausdorff.

It is easy to see that for any $c\in\R$, there is an ordinary smooth
function $g:\R\ra\R$ with the properties that $g(x+2)=g(x)$ and
$g(x)=g(1-x)$ for each $x\in\R$, $g(0)=g'(0)=\cdots=g^{(k)}(0)=0$
and $g^{(k+1)}(0)=c$. From these properties, one derives that 
\[
g^{(i)}(m)=\begin{cases}
g^{(i)}(0), & \textrm{if \text{m }is even}\\
(-1)^{i}g^{(i)}(0), & \textrm{if \text{m }is odd}
\end{cases}
\]
for any $i\in\N$ and any $m\in\Z$. Now we define $f_{n}(x)=\frac{1}{n^{k+1}}g(nx)$.
Then $f_{n}^{(i)}(x)=\frac{1}{n^{k+1-i}}g^{(i)}(nx)$ for any $i\in\N$.
So for any $x\in\R$, $\lim_{n\ra\infty}f_{n}^{(i)}(x)=0$ for each
$i=0,1,\ldots,k$, and $f_{n}^{(k+1)}(x)=g^{(k+1)}(nx)$.

For any $a\in D_{k+1}:=\{x\in\FR\mid x^{k+2}=0\}$, we have 
\[
\begin{split}\Ff_{n}(1+a) & =f_{n}(1)+f_{n}'(1)\cdot a+\frac{f_{n}''(1)}{2!}\cdot a^{2}+\cdots+\frac{f_{n}^{(k)}(1)}{k!}\cdot a^{k}+\frac{f_{n}^{(k+1)}(1)}{(k+1)!}\cdot a^{k+1}\\
 & =\frac{g^{(k+1)}(n)}{(k+1)!}\cdot a^{k+1}.
\end{split}
\]
By assumption, the sequence $(\Ff_{n}(1+a))_{n\in\Z^{>0}}$ converges
in $\tau$. So we can conclude that every $\tau$-open neighborhood
of $0$ in $\FR$ contains $A:=\{x^{k+1}\mid x\in D_{k+1}\}$. Fix
any $u,v\in A$ with $u\neq v$. Since $A\cap(u-v+A)\neq\emptyset$,
we know that the additive topology $\tau$ is not Hausdorff.
\end{proof}
Finally, we discuss the uniqueness of the pointwise convergence in
the Euclidean topology in the following sense:

Let $X$ be a set, and let $\tau_{1},\tau_{2}$ be two topologies
on $X$. We say that $\tau_{1}$ and $\tau_{2}$ are \emph{strongly
convergence equivalent} if a sequence $(x_{n})_{n\in\N}$ in $X$
converges to $x\in X$ in $\tau_{1}$ if and only if it converges
to $x$ in $\tau_{2}$. We have:
\begin{thm}
Let $X$ be a set, and let $\tau_{1},\tau_{2}$ be first-countable
topologies on $X$. Then $\tau_{1}$ and $\tau_{2}$ are strongly
convergence equivalent if and only if $\tau_{1}=\tau_{2}$.\end{thm}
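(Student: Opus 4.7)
The plan is to prove the non-trivial direction by showing that under first countability, the closed sets of $\tau_i$ are exactly the sequentially $\tau_i$-closed sets; then strong convergence equivalence immediately forces $\tau_1$ and $\tau_2$ to have the same closed sets, hence $\tau_1=\tau_2$. The reverse implication is obvious, since convergence of a sequence is defined in terms of the neighborhood filter and thus only depends on the topology.

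First I would isolate the following lemma: for a first-countable topology $\tau$ on $X$, a subset $C\subseteq X$ is $\tau$-closed if and only if it is \emph{sequentially $\tau$-closed}, in the sense that every sequence in $C$ that $\tau$-converges has its limit in $C$. The forward direction is the routine argument: if $C$ is closed and $x\notin C$, then $X\setminus C$ is a $\tau$-open neighborhood of $x$, into which any sequence $\tau$-converging to $x$ must eventually fall, so such a sequence cannot lie in $C$. For the reverse direction I would argue by contrapositive: if $C$ is not closed, pick $x$ in the $\tau$-closure of $C$ but not in $C$, use first countability to choose a countable decreasing neighborhood base $U_1\supseteq U_2\supseteq\cdots$ of $x$, and select $x_n\in U_n\cap C$ (nonempty because $x$ is in the closure). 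Then $(x_n)\subseteq C$ and $x_n\to x$ in $\tau$, showing $C$ is not sequentially $\tau$-closed.

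Granted the lemma, the theorem is immediate: for any $C\subseteq X$, we have $C$ is $\tau_1$-closed iff $C$ is sequentially $\tau_1$-closed iff $C$ is sequentially $\tau_2$-closed (using the assumption that $\tau_1$ and $\tau_2$ are strongly convergence equivalent, so they have the same convergent sequences with the same limits) iff $C$ is $\tau_2$-closed. Hence the two topologies agree on closed sets and therefore coincide.

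I do not anticipate a serious obstacle: the only place first countability is genuinely used is in extracting a sequence $x_n\to x$ from a nested countable base, which is the entire point of the hypothesis. One minor point to double-check while writing is that the definition of strong convergence equivalence matches limits as well as convergent sequences, so no sequence can converge to different points in $\tau_1$ and $\tau_2$; this is built into the definition and is what makes the equivalence of sequential closedness for $\tau_1$ and $\tau_2$ work.
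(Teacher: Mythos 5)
Your proposal is correct and is essentially the paper's own argument, merely dualized: the paper shows directly that every $\tau_{1}$-open set $A$ is $\tau_{2}$-open by taking a nested countable $\tau_{2}$-neighborhood base at a point $a\in A$, picking points outside $A$ from each base element, and using strong convergence equivalence to contradict $\tau_{1}$-openness, which is exactly the content of your lemma that in a first-countable space the closed sets are the sequentially closed sets. Both uses of first countability are identical (extracting a convergent sequence from a nested countable base), so there is nothing further to compare.
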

\begin{proof}
It is enough to show that for any $\tau_{1}$-open subset $A$ of
$X$ and any $a\in A$, there exists a $\tau_{2}$-open neighborhood
$B$ of $a$ such that $B\subseteq A$. Assume that this is not true,
i.e., there exist a $\tau_{1}$-open subset $A$ of $X$ and $a\in A$
such that for any $\tau_{2}$-open neighborhood $B$ of $a$, $B\setminus A\neq\emptyset$.
Since $\tau_{2}$ is first-countable, there exists a countable $\tau_{2}$-neighborhood
basis $\{B_{i}\}_{i\in\N}$ of $a$ such that $\cdots\subseteq B_{1}\subseteq B_{0}$.
Pick $b_{i}\in B_{i}\setminus A$. Then the sequence $(b_{n})_{n\in\N}$
is a sequence which converges to $a$ in $\tau_{2}$. Since $\tau_{1}$
and $\tau_{2}$ are strongly convergence equivalent, this sequence
also converges to $a$ in $\tau_{1}$, which implies that $b_{i}\in A$
for $i$ large enough, i.e., we reach a contradiction.
\end{proof}
Note that all of the $\omega$-topology, the order topology and the
Euclidean topology are first-countable.

In particular of the above theorem, we have:
\begin{cor}
The pointwise convergence in the Euclidean topology is unique among
those in all first-countable topologies on Fermat reals.
\end{cor}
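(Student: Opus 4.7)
The plan is to deduce this corollary as an immediate application of the preceding theorem, once two preliminary observations are in place. First I would verify that the Euclidean topology is itself first-countable: by definition it is induced by the Euclidean inner product, so for every $x \in \FR$ the countable family of open balls of radius $\tfrac{1}{n}$ around $x$ (for $n \in \Z^{>0}$) is a neighborhood base at $x$, giving first-countability.

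Next I would translate the statement into the language of sequence convergence in $\FR$, so that the preceding theorem becomes directly applicable. Suppose $\tau$ is any first-countable topology on Fermat reals (so, by Definition~\ref{de:topology}, $\tau$ is an additive Hausdorff topology) whose pointwise convergence of sequences of quasi-standard smooth functions with values in $\FR$ agrees with that of the Euclidean topology. Since pointwise convergence of $(f_n)$ to $f$ means coordinatewise convergence $f_n(x) \to f(x)$ in $\FR$ for each $x$ in the domain, this hypothesis is equivalent to asserting that a sequence $(a_n)_{n\in\N}$ in $\FR$ converges to some $a \in \FR$ in $\tau$ if and only if it does so in the Euclidean topology. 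That is, $\tau$ and the Euclidean topology are strongly convergence equivalent in the sense of the preceding theorem.

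Finally, I would invoke the preceding theorem: two first-countable topologies on the set $\FR$ that are strongly convergence equivalent must coincide. Hence $\tau$ equals the Euclidean topology, which is exactly the uniqueness claim. There is no real obstacle here since the argument is formal; the only point that requires a moment of thought is the interchange between pointwise convergence of function sequences and sequence convergence in the codomain, and this is immediate from the definition of pointwise convergence given earlier in the paper.
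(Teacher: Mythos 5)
Your proposal is correct and follows essentially the same route as the paper, which simply notes that the Euclidean topology is first-countable and invokes the preceding theorem on strongly convergence equivalent first-countable topologies. The only step you leave implicit is that agreement of pointwise convergence for quasi-standard smooth function sequences forces agreement of convergence for arbitrary sequences in $\FR$ (take constant functions $f_n\equiv a_n$, which the paper records as quasi-standard smooth), after which the theorem applies verbatim.
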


\subsection{Impossibility of Lebesgue dominated convergence}

As expected, the pointwise limit of a sequence of quasi-standard smooth
functions may not be quasi-standard smooth in any topology:
\begin{example}
\label{eg:ptwise-convergent} For $i=2,3,4,\ldots$, let $\sigma_{i}:\R\ra\R$
be an ordinary smooth function with the properties that $\sigma_{i}(\frac{1}{i})=1$,
$\im(\sigma_{i})=[0,1]_{\R}$, $\supp(\sigma_{i})\subseteq\big[\frac{1}{2}(\frac{1}{i+1}+\frac{1}{i}),\frac{1}{2}(\frac{1}{i-1}+\frac{1}{i})\big]_{\R}$,
and $\int_{0}^{1}\sigma_{i}(x)dx=1$%
\footnote{The last condition is a normalization, which will only be used in
the remark following this example.%
}. Then $(f_{n}:\FR\ra\FR)_{n\in\Z^{>0}}$ defined by 
\[
f_{n}(x)=\sum_{i=1}^{n}{^{\bullet}\sigma_{i+1}(x)}\cdot t^{1/i}
\]
is a sequence of quasi-standard smooth functions, which pointwise
converges to the function $f:\FR\ra\FR$ with 
\[
f(x)=\begin{cases}
{^{\bullet}\sigma_{i}(x)}\cdot t^{1/i}, & \textrm{if \ensuremath{x\in\big(\frac{1}{2}(\frac{1}{i+1}+\frac{1}{i}),\frac{1}{2}(\frac{1}{i-1}+\frac{1}{i})\big)}}\\
0, & \textrm{else}
\end{cases}
\]
in any topology (not necessarily additive Hausdorff). By Proposition~\ref{prop:qscoo},
$f$ is not quasi-standard smooth (around $0$).\end{example}
\begin{rem}
By the results we developed in the previous section, a similar statement
to the classical Lebesgue dominated convergence theorem does not hold
for quasi-standard smooth functions in either of the $\omega$-topology,
the order topology or the Euclidean topology. This is because, in
the above example, $|f_{n}(x)|\leq1$, but the limit of 
\[
\int_{0}^{1}f_{n}(x)dx=\sum_{i=1}^{n}t^{1/i}\cdot\int_{0}^{1}{^{\bullet}\sigma_{i+1}(x)}dx=\sum_{i=1}^{n}t^{1/i}\cdot\int_{0}^{1}\sigma_{i+1}(x)dx=\sum_{i=1}^{n}t^{1/i}
\]
as $n\ra\infty$ does not exist in the $\omega$-topology (Lemma~\ref{lem:ptwise-convergent-omega})
or the order topology (Lemma~\ref{lem:ptwise-convergent-order})
or the Euclidean topology (Definition~\ref{de:Euclidean}); for integration
of quasi-standard smooth functions, see~\cite{GW}.
\end{rem}
More generally, we have:
\begin{thm}
\label{thm:Lebesgue}The similar statement to the classical Lebesgue
dominated convergence theorem does not hold for $\FR$ with any topology
(Definition~\ref{de:topology}).\end{thm}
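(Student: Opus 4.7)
The plan is to reuse the sequence $f_n(x) = \sum_{i=1}^n {}^{\bullet}\sigma_{i+1}(x) \cdot t^{1/i}$ from Example~\ref{eg:ptwise-convergent} as a universal counterexample. The two ingredients needed for a hypothetical Lebesgue statement both hold in a topology-free manner: at each fixed $x \in \FR$ the supports of the $\sigma_i$'s contribute at most one term, so $|f_n(x)| \le 1$ and $f_n(x)$ stabilizes for $n$ large, yielding pointwise convergence to the function $f$ of Example~\ref{eg:ptwise-convergent}, with dominating function $g \equiv 1$, in every topology on $\FR$. Granting a Lebesgue dominated convergence conclusion in an additive Hausdorff topology $\tau$ on $\FR$ would therefore, by $\R$-linearity of the Fermat integral \cite{GW}, force the sequence $s_n := \int_0^1 f_n\,dx = \sum_{i=1}^n t^{1/i}$ to converge in $\tau$ to some $s \in \FR$. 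The theorem thus reduces to the purely topological claim that $\sum_{i=1}^\infty t^{1/i}$ fails to converge in $\FR$ under any additive Hausdorff topology.

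To handle this claim, I would follow the strategy used in the proof of Theorem~\ref{thm:best}. Assuming $s_n \to s$ in $\tau$, the Cauchy criterion in the topological group $(\FR,\tau)$ guarantees that for every $\tau$-neighborhood $U$ of $0$ there is $N$ such that every contiguous tail sum $\sum_{i=k+1}^{k+\ell} t^{1/i}$ with $k \ge N$ and $\ell \ge 1$ lies in $U$. Combining such tails by additivity, and choosing a symmetric neighborhood $V$ with $V - V \subseteq U$, the aim is to extract a \emph{fixed} non-zero element $z \in \FR$ forced to lie in every $\tau$-neighborhood of $0$; Hausdorffness then fails exactly as in the closing step of the proof of Theorem~\ref{thm:best}.

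The main obstacle is the production of such a $z$. In Theorem~\ref{thm:best} the forced family $A = \{x^{k+1} : x \in D_{k+1}\}$ was explicit and already contained non-zero members such as $t = (t^{1/(k+1)})^{k+1}$. Here the content the Cauchy criterion pushes into each $\tau$-neighborhood of $0$ consists only of sums of the $\R$-linearly independent nilpotents $t^{1/i}$, which do not algebraically collapse to a common non-zero element; the derived consequence $t^{1/n} \to 0$ in $\tau$ is by itself compatible with Hausdorffness. The delicate point is therefore to play the finiteness of the decomposition of the would-be limit $s \in \FR$ off against the unboundedness, in the number of non-zero coefficients, of the partial sums $s_n$, in order to exhibit a non-zero $z$ that must lie in every $\tau$-neighborhood of $0$.
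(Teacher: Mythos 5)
Your reduction is set up exactly as in the paper: take $g\equiv 1$, use Example~\ref{eg:ptwise-convergent}, and observe that a Lebesgue-type conclusion would force the partial sums of the integrals to converge in $\tau$. But the gap you flag at the end is fatal to the route you chose, not merely a delicate point left to fill in. By fixing the single series $\sum_{i}t^{1/i}$ you have reduced the theorem to the claim that this one series diverges in \emph{every} additive Hausdorff topology on $\FR$, and that claim is false. As you note, the Cauchy criterion only pushes the tails (hence the individual $t^{1/i}$) into every $\tau$-neighborhood of $0$, which is compatible with Hausdorffness; worse, viewing $\FR$ as an abelian group (a direct sum of copies of $\R$ indexed by $\{0\}\cup(0,1]_{\R}$), the sequence $b_{n}-s$ with $b_{n}=\sum_{i=1}^{n}t^{1/i}$ and $s$ any prospective limit satisfies the Protasov--Zelenyuk T-sequence criterion: a finite integer combination $\sum_{j}n_{j}(b_{i_{j}}-s)$ with all $i_{j}$ beyond the (finite) supports of $s$ and of a fixed $g\neq0$ either vanishes or has a nonzero coefficient at $t^{1/i_{j}}$ for the largest surviving index, so it cannot equal $g$. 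Hence there \emph{does} exist an additive Hausdorff topology on $\FR$ in which your $s_{n}$ converges, and no amount of playing the finiteness of the decomposition of $s$ against the growth of the partial sums will produce the nonzero $z$ you are looking for.

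The paper's proof gets around this by using the universal quantifier in the dominated convergence statement rather than a single witness: the hypothesis must hold for every dominated pointwise convergent sequence, so one may replace $t^{1/i}$ in Example~\ref{eg:ptwise-convergent} by an \emph{arbitrary} sequence $(\delta_{i})$ in $D_{\infty}$ (the bound $|f_{n}|\leq1$ and the pointwise convergence in every topology are unaffected, since at each point only one term of the sum survives). The Cauchy criterion then says: for every choice of $(\delta_{i})$ and every $\tau$-neighborhood $T$ of $0$, $\delta_{l}\in T$ for $l$ large. If some $T$ missed infinitely many points of $D_{\infty}$, enumerating those points as $(\delta_{i})$ would contradict this; so every $\tau$-neighborhood of $0$ contains all but finitely many points of $D_{\infty}$, and since $D_{\infty}$ is an infinite subgroup, translating shows that no two of its points can be separated. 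Your argument becomes correct once you make this one change: vary the infinitesimal coefficients over all of $D_{\infty}$ instead of trying to extract a contradiction from the single series $\sum_{i}t^{1/i}$.
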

\begin{proof}
We prove below that if $\tau$ is an additive topology on $\FR$ such
that for every pointwise convergent sequence $(f_{n}:\FR\ra\FR)_{n\in\Z^{>0}}$
in $\tau$ with $|f_{n}|\leq g$ for some quasi-standard smooth function
$g:\FR\ra\FR$, we have the existence of $\lim_{n\ra\infty}\int_{0}^{1}f_{n}(x)dx$
in $\FR$ with respect to $\tau$, then $\tau$ cannot be Hausdorff.

In fact, we can take $g$ to be the constant function with value $1$.
Observe that in Example~\ref{eg:ptwise-convergent}, we can change
$t^{1/i}$ to any $\delta_{i}\in D_{\infty}$, so that we still have
a pointwise convergent sequence $(f_{n}^{(\delta_{i})_{i\in\Z^{>0}}})_{n\in\N}$
in any topology, and we have 
\begin{equation}
\int_{0}^{1}f_{n}^{(\delta_{i})_{i\in\Z^{>0}}}(x)dx=\sum_{i=1}^{n}\delta_{i}=:b_{n}.\label{eq:integral}
\end{equation}
By assumption, the sequence $(b_{n})_{n\in\Z^{>0}}$ converges in
$\tau$, i.e., for any $\tau$-open neighborhood $T$ of $0\in\FR$,
there exists $N\in\Z^{>0}$ such that for every $n,m>N$, $b_{n}-b_{m}\in T$.
In particular, $\delta_{l}\in T$ for $l$ large enough. So we get
the following information of $\tau$: every $\tau$-open neighborhood
of $0\in\FR$ contains all but finitely many points of $D_{\infty}$.
Therefore, $\tau$ cannot be Hausdorff.
\end{proof}

\section{\label{sec:IVP}Intermediate value property}

Recall that the Fermat reals $\FR$ has a total ordering, which is
an extension of the usual ordering on $\R$, and which makes $\FR$
an ordered commutative ring. In this section, we investigate which
quasi-standard smooth functions $f:\FU\ra\FR$, with $U$ an open
connected subset of $\R$, has the \emph{intermediate value property},
i.e., if $a,b\in\FU$ with $a<b$, then for any $y\in\FR$ between
$f(a)$ and $f(b)$, there exists some $c\in\FR$ between $a$ and
$b$ such that $f(c)=y$. 

Since quasi-standard smooth functions are locally restrictions (in
the sense of Fermat topology) of the Fermat extension of ordinary
smooth functions, we first discuss the question for Fermat extension
of ordinary smooth functions, which is answered in Corollary~\ref{cor:intermediate for Fermat extension}.
We subsequently use the idea of the proof of this corollary to get
a general criteria (Proposition~\ref{prop:intermediate}), together
with some applications. In order to reach these results, we will need
some preparations.

\subsection{The slice image theorem}

Here is the \emph{slice image theorem} for one variable, as one of
the key ingredients for solving the intermediate value property of
Fermat extension of ordinary smooth functions:
\begin{thm}
\label{thm:slice image theorem}Let $f:\R\ra\R$ be a smooth function,
and let $a\in\R$.
\begin{enumerate}[leftmargin=*,label=(\roman*),align=left ]
\item \label{enu:sliceFlatPoint}If $f^{(n)}(a)=0$ for all $n\in\Z^{>0}$,
then 
\[
\Ff(a+D_{\infty})=\{f(a)\};
\]

\item Assume that there exists some $n\in\Z^{>0}$ such that $f^{(n)}(a)\neq0$.
Denote by $m$ the smallest of such $n$.

\begin{enumerate}
\item \label{enu:slice_n_odd}If $m$ is odd, then
\[
\Ff(a+D_{\infty})=f(a)+D_{\infty};
\]

\item \label{enu:slice_n_even}If $m$ is even, then
\[
\Ff(a+D_{\infty})=f(a)+\sgn(f^{(m)}(a))\cdot D_{\infty}^{\geq0},
\]
where $D_{\infty}^{\ge0}:=\left\{ h\in D_{\infty}\mid h\ge0\right\} $.
\end{enumerate}
\end{enumerate}
\end{thm}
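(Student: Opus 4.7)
The plan is to exploit Taylor's formula with nilpotent increments, which turns $\Ff(a+h)$ into a finite polynomial expression. Since $h\in D_\infty$ is nilpotent, some $h^{N+1}=0$, and
\[
\Ff(a+h) \;=\; f(a)+\sum_{i=1}^{N}\frac{f^{(i)}(a)}{i!}\,h^i.
\]
Part~\ref{enu:sliceFlatPoint} is then immediate, since every Taylor coefficient vanishes. For the remaining cases $f^{(i)}(a)=0$ for $1\leq i<m$ and $f^{(m)}(a)\neq 0$, so
\[
\Ff(a+h)-f(a) \;=\; \sum_{i=m}^{N}\frac{f^{(i)}(a)}{i!}\,h^i.
\]

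The forward inclusions follow directly. In case~\ref{enu:slice_n_odd} the right-hand side manifestly lies in $D_\infty$. In case~\ref{enu:slice_n_even} I would argue via the dictionary order on $\FR$: write $h$ in its decomposition $h=\beta_1 t^{b_1}+\beta_2 t^{b_2}+\cdots$. If $mb_1>1$ then nilpotency forces $h^i=0$ for every $i\geq m$, so $\Ff(a+h)=f(a)$ sits trivially in the target set. Otherwise the full expansion of the sum above has a unique monomial of minimal exponent $mb_1\leq 1$, namely $\frac{f^{(m)}(a)}{m!}\beta_1^{m} t^{mb_1}$, because any other contribution $(j_1,\ldots,j_l)$ with $\sum j_k=i\geq m$ and some $j_k>0$ for $k\geq 2$ strictly raises the exponent; since $m$ is even, $\beta_1^{m}>0$, and the leading coefficient carries the sign of $f^{(m)}(a)$, placing $\Ff(a+h)-f(a)$ in $\sgn(f^{(m)}(a))\,D_\infty^{\geq 0}$.

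The reverse inclusion is the substantive content. Given a target $k$, I would write $k=\alpha_1 t^{c_1}+\cdots+\alpha_l t^{c_l}$ in decomposition form and construct $h\in D_\infty$ solving $\sum_{i\geq m}\frac{f^{(i)}(a)}{i!}h^i=k$ by an iterative procedure that matches the decomposition of $k$ term by term. As a first approximation set $h^{(1)}:=\beta_1 t^{c_1/m}$ with $\beta_1^{m}=m!\,\alpha_1/f^{(m)}(a)$; a real $\beta_1$ exists because either $m$ is odd (every real has an $m$-th root) or $m$ is even, in which case the sign hypothesis on $k$ forces $\alpha_1/f^{(m)}(a)\geq 0$. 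Subtracting $\sum_{i\geq m}\frac{f^{(i)}(a)}{i!}(h^{(1)})^i$ from $k$ produces a residue whose leading exponent is strictly larger than $c_1$; one then adds a correction of strictly larger leading exponent and repeats.

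The main obstacle will be proving termination of this iterative algorithm. Each step strictly increases the leading exponent, yet higher-order cross terms in $\sum_i \frac{f^{(i)}(a)}{i!} h^i$ can in principle introduce new exponents not originally present in $k$. I would control this by a mixed symbolic and real argument: symbolically, keep a ledger of the finitely many exponents generated by the polynomial map $h\mapsto\sum_i\frac{f^{(i)}(a)}{i!}h^i$ under iteration; on the real side, combine the nilpotency bound (monomials of exponent exceeding $1$ are killed in $\FR$) with the standing cap $b_j\leq 1$ on decomposition exponents to force stabilization in finitely many steps. This is consistent with the author's remark that the slice image theorem is an algorithm whose finite termination is established by a mixture of real and symbolic computations.
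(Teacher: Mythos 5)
Your overall strategy coincides with the paper's: part~(i) and the forward inclusions from the finite Taylor expansion (your sign analysis of the leading coefficient $\frac{f^{(m)}(a)}{m!}\beta_1^m t^{mb_1}$ in the even case is correct and is the intended argument), and the reverse inclusion by iteratively matching the leading term of the residue, solving $f^{(m)}(a)x^m=m!\,\alpha_1$ at the first step and linear corrections thereafter. You also correctly locate the crux in the termination of this algorithm. But that is where the proposal has a genuine gap: you do not prove termination, and the mechanism you sketch would not do it. ``Each step strictly increases the leading exponent'' is not sufficient, because the increments could a priori shrink (say by $b_2-b_1$, then smaller and smaller amounts) so that the exponents accumulate below $1$ and the cap $b_j\le 1$ is never breached; and ``keep a ledger of the finitely many exponents generated under iteration'' presupposes exactly what must be shown, since each step of the algorithm can create new exponents not present in the target $k$ nor in any previous step.

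What the paper actually does to close this is quantitative. Writing the approximate solution as a quasi-decomposition $a+\sum_j c_j t^{b_j}$ and tracking, at each step $r$, both the leading term of the increment $F_r$ (of degree $(m-1)b_1+b_r$) and its \emph{second} leading term (of degree $(m-2)b_1+b_2+b_r$, using the bound $b_2\le 2b_1$ forced at Step~2), one shows that only finitely many residue terms can have degree between these two, and hence that for every $r$ there is some later step $l$ with $b_l=b_r+(b_2-b_1)$. Thus the exponents increase by at least the \emph{fixed} positive amount $b_2-b_1$ every finitely many steps, which is what forces some $b_j$ past $1$ and terminates the algorithm. The ``mixture of real and symbolic computations'' you allude to is not a separate trick but the bookkeeping device (retaining symbolically zero coefficients up to a moving index) that makes the notion of ``second leading term of $F_r$'' well defined so that this uniform lower bound on the increments can be extracted. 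Without some such uniform bound, your iteration is only a formal Newton--Puiseux-type scheme and the reverse inclusion remains unproved.
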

\begin{proof}
(1) This is clear from Taylor's expansion
\begin{equation}
\Ff(a+x)=f(a)+f'(a)x+\frac{f''(a)}{2!}x^{2}+\frac{f'''(a)}{3!}x^{3}+\cdots\label{eq:Taylor}
\end{equation}
for any $x\in D_{\infty}$. As usual, the sum in \eqref{eq:Taylor}
is finite since $x$ is nilpotent.

(2a) The inclusion $\Ff(a+D_{\infty})\subseteq f(a)+D_{\infty}$ follows
from \eqref{eq:Taylor}. Since $m$ is the smallest positive integer
such that $f^{(m)}(a)\neq0$, we have $m\geq1$, and 
\[
\Ff(a+x)=f(a)+\frac{f^{(m)}(a)}{m!}x^{m}+\frac{f^{(m+1)}(a)}{(m+1)!}x^{m+1}+\cdots
\]
for any $x\in D_{\infty}$. This is a finite sum since $x$ is nilpotent.
We introduce the following terminologies: given a quasi-decomposition
$z=\stz+\sum_{i=1}^{l}z_{i}t^{k_{i}}\in\FR$, we say that the \emph{leading
term} of $z-\stz$ is $z_{1}t^{k_{1}}$ with \emph{degree} $k_{1}$
and that the \emph{second leading term} is $z_{2}t^{k_{2}}$ with
\emph{degree} $k_{2}$. We will do a certain mixture of real and symbolic
calculations below. More precisely, first of all, we can think of
doing symbolic computations in the following algorithm using this
Taylor's expansion for all $f^{(i)}(a)$ with $i\geq m$, even if
some of them are already $0$, and during the computations, we will
not omit any terms computed via these terms in this Taylor's expansion,
even if the coefficient we get is already $0$; of course, by the
nilpotency of infinitesimals, only finitely many terms will stay.
Moreover, then from Step 1 on, we will fix a certain index $\eta$
at each step such that for the terms with degree $>\eta$ we do real
computations, i.e., we omit the terms with coefficient $=0$, and
for all the other terms, we keep everything computed symbolically.
Given any $w=f(a)+\sum_{i=1}^{k}w_{i}t^{a_{i}}\in f(a)+D_{\infty}$
as its \emph{decomposition}, we will find $c=a+\sum_{j=1}^{s}c_{j}t^{b_{j}}$
as its \emph{quasi-decomposition} from the following recursive algorithm
such that $\Ff(c)=w$. (Actually, for writing down the algorithm for
computing $c$, we can make the expression of $c$ as its decomposition
instead of quasi-decomposition. The mixture of real and symbolic calculations
and the quasi-decomposition of $c$ are very useful for estimating
the termination of the algorithm in finitely many steps at the end
of this proof.%
\footnote{After having a global picture of the idea of this proof, one could
instead think of a refined proof of using the decomposition of $c$
and real computations, and one will realize how complicated it becomes.%
}) For $F_{i}$ below, we mix real and symbolic computations (with
index $\eta_{i}$) as mentioned above, and for $G_{i}$, we do real
computations. 

Step 0: Let $G_{0}=w$ and let $F_{0}=f(a)$.

Step 1: Let $G_{1}=G_{0}-F_{0}=w-f(a)$, let $\eta_{1}=\frac{a_{1}}{m}$
and let $F_{1}=\Ff(a+c_{1}t^{b_{1}})-f(a)$, where $c_{1}$ and $b_{1}$
are chosen so that the leading terms of $F_{1}$ and $G_{1}$ are
equal. This is possible since we can set $b_{1}=\frac{a_{1}}{m}$,
and $c_{1}$ is the solution of the equation $f^{(m)}(a)x^{m}=m!w_{1}$
since $m$ is odd by assumption. Note that the second leading term
of $F_{1}$ has degree $(m+1)b_{1}$.

Step 2: Let $G_{2}=G_{1}-F_{1}=w-\Ff(a+c_{1}t^{b_{1}})$, let $\eta_{2}=\text{leading}(G_{2})-(m-1)b_{1}$
and let $F_{2}=\Ff(a+c_{1}t^{b_{1}}+c_{2}t^{b_{2}})-\Ff(a+c_{1}t^{b_{1}})$,
where $c_{1}$ and $b_{1}$ are determined in Step 1, and $c_{2}$
and $b_{2}$ are chosen so that the leading terms of $F_{2}$ and
$G_{2}$ are equal. This is possible since $b_{2}>b_{1}$ from the
requirement and Step 1%
\footnote{It is easy to check that if $b_{1}\geq b_{2}$, then the degree of
the leading term of $F_{2}$ is $mb_{2}$. So we have $b_{1}\geq b_{2}=\frac{\text{leading}(G_{2})}{m}>\frac{\text{leading }(G_{1})}{m}=\frac{a_{1}}{m}$,
contradicting that $b_{1}=\frac{a_{1}}{m}$ in the first step.%
}, and the leading term of $F_{2}$ is $\frac{f^{(m)}(a)}{(m-1)!}c_{1}^{m-1}c_{2}t^{(m-1)b_{1}+b_{2}}$.
Moreover, since the degree of the leading term of $G_{2}$ is either
$a_{2}$ or the degree of the second leading term of $F_{1}$, we
get 
\[
(m-1)b_{1}+b_{2}=\min\{a_{2},(m+1)b_{1}\}\leq(m+1)b_{1},
\]
i.e., $b_{2}\leq2b_{1}$. Hence, the second leading term of $F_{2}$
has degree $(m-2)b_{1}+2b_{2}$.

$\cdots$

Step r: Let $G_{r}=G_{r-1}-F_{r-1}=w-\Ff(a+\sum_{i=1}^{r-1}c_{i}t^{b_{i}})$,
let $\eta_{r}=\text{leading}(G_{r})-(m-1)b_{1}$ and let $F_{r}=\Ff(a+\sum_{i=1}^{r}c_{i}t^{b_{i}})-\Ff(a+\sum_{i=1}^{r-1}c_{i}t^{b_{i}})$,
where $(c_{1},b_{1}),\ldots,(c_{r-1},b_{r-1})$ are determined in
Step 1, $\cdots$, Step (r-1), and $c_{r}$ and $b_{r}$ are chosen
so that the leading terms of $F_{r}$ and $G_{r}$ are equal. This
is possible since $b_{r}>b_{r-1}$ from the requirement and Step (r-1)%
\footnote{It is easy to check that the degree of of the leading term of $F_{r}$
is $mb_{r}$ if $b_{1}\geq b_{r}$ or $(m-1)b_{1}+b_{r}$ otherwise.
The conclusion then follows easily.%
}, and the leading term of $F_{r}$ is $\frac{f^{(m)}(a)}{(m-1)!}c_{1}^{m-1}c_{r}t^{(m-1)b_{1}+b_{r}}$.
Note that the second leading term of $F_{r}$ has degree $(m-2)b_{1}+b_{2}+b_{r}$
since $b_{2}\leq2b_{1}$.

$\cdots$

Now we show that this procedure terminates in finitely many steps,
i.e., $G_{s}=o(t)$ for some $s\in\N$. Note that $G_{r}$ measures
the closeness of $a+\sum_{i=1}^{r-1}c_{i}t^{b_{i}}$ to the solution
of the equation $\Ff(x)=w$ at Step (r-1), and $F_{r}$ measures the
new extra terms created at Step r. From the above analysis, we know
that at Step r, there are only finitely many terms in $G_{r}$ which
has degree less than the degree of the second leading term of $F_{r}$,
and the new extra terms created in later on steps all have degree
greater than the degree of the second leading term of $F_{r}$%
\footnote{To see this, note that if exists, we can order the terms in $G_{r}$
by its degree which has degree strictly between the degree of the
leading term of $F_{r}$ (which is $(m-1)b_{1}+b_{r}$) and the degree
of the second leading term of $F_{r}$ (which is $(m-2)b_{1}+b_{2}+b_{r}$),
say they are $\{x_{1},x_{2},\cdots,x_{s}\}$ with $\deg(x_{1})<\deg(x_{2})<\cdots<\deg(x_{s})$.
Then $x_{1}$ becomes the leading term of $G_{r+1}$, and the second
leading term of $F_{r+1}$ is $(m-2)b_{1}+b_{2}+b_{r+1}$ which is
strictly greater than the second leading term of $F_{r}$. One can
then see that $x_{i}$ becomes the leading term of $G_{r+i}$, and
the second leading term of $F_{r+i}$ is $(m-2)b_{1}+b_{2}+b_{r+i}$
which is strictly greater than the second leading term of $F_{r+i}$,
for each $i=1,2,\cdots,s$. %
}. This is to say that for any fixed $r$, after finitely many steps,
the degree of the second leading term of $F_{r}$ becomes the degree
of the leading term of $G_{l}$ for some $l>r$. So for any $r\in\Z^{>0}$,
there exists $l>r$ such that $(m-1)b_{1}+b_{l}=(m-2)b_{1}+b_{2}+b_{r}$,
i.e., $b_{l}=b_{r}+(b_{2}-b_{1})$. This is to say that after finitely
many terms, the degree of the terms in $c$ will raise at least a
fixed positive constant, although we do not know explicitly how much
$b_{i}$ increases at each step. Therefore, the degree of some term
in $c$ will be greater than $1$ after finitely many steps, which
implies the termination of the algorithm after finitely many steps.

(2b) The proof is similar to (2a), except in Step 1 when solving the
equation $f^{(m)}(a)x^{m}=m!w_{1}$, since $m$ is even by assumption.
In this case, we can only solve this equation in $\R$ when $f^{(m)}(a)$
and $w_{1}$ have the same sign.\end{proof}
\begin{rem}
\label{rem:slice image}\ \end{rem}
\begin{enumerate}
\item As a warning, the proof of (2a) of the above theorem does not mean
that in that case the restriction map $\Ff|_{a+D_{\infty}}:a+D_{\infty}\ra\Ff(a)+D_{\infty}$
is injective. Instead, the algorithm in the proof gives the simplest
solution (called the \emph{fundamental solution}) to the equation
$\Ff(x)=w$ with real part $a$, in the sense that every solution
with real part $a$ is of this form possibly plus some more terms.
This can be proved using \eqref{enu:fundamental solution} below.
For example, by the algorithm, the equation $x^{3}=t$ has a solution
$x=t^{1/3}$. In fact, $x=t^{1/3}+y$ is a solution of this equation
as long as $y\in D_{2}$. In other words, the polynomial equation
$x^{3}=t$ has uncountably many solutions in $\FR$.
\item \label{enu:fundamental solution}Under the assumption of (2a), we
further assume that $y\in\FR$ is already a solution to the equation
$\Ff(x)=w$. Here is the procedure to get all the solutions of this
equation with real part $\sty$. First, we can refine $y=\sty+\sum_{i=1}^{n}\alpha_{i}t^{a_{i}}$
as its decomposition to get the fundamental solution: if $m$ is the
smallest positive integer such that $f^{(m)}(\sty)\neq0$, then $\tilde{y}=\sty+\sum_{i}\alpha_{i}t^{a_{i}}$
with the sum indexed by all $i=1,2,\ldots,n$ with $(m-1)a_{1}+a_{i}\leq1$,
is the fundamental solution. Then we get all the solutions: $\tilde{y}+z$
for $z\in D_{\infty}$ such that the degree of $z$ is greater than
\[
\begin{cases}
1-(m-1)a_{1,} & \text{if }ma_{1}\leq1\\
1/m, & \text{{otherwise.}}
\end{cases}
\]
We can get a similar result under the assumption of (2b), noting that
there are two fundamental solutions in that case when $w$ is not
real.
\item \label{enu:no intermediate value}Not every smooth function $\FR\ra\FR$
has the intermediate value property. For example, let $f:\R\ra\R$
be defined by
\[
f(x)=\begin{cases}
e^{-1/x}, & \text{if }x>0\\
0, & \text{else.}
\end{cases}
\]
Then $f$ is smooth, and $^{\bullet}(0,\infty)\cup\{0\}=\text{Im}(\Ff)$.
\item As a refinement of (2b) of the above theorem, we have
\[
\Ff(a+D_{\infty}^{\geq0})=f(a)+\sgn(f^{(m)}(a))\cdot D_{\infty}^{\geq0}=\Ff(a+D_{\infty}^{\leq0})
\]
under the same assumption.
\end{enumerate}
So we can determine the images of the Fermat extension of elementary
functions:
\begin{example}
\ \end{example}
\begin{enumerate}
\item Let $n\in\Z^{>0}$ and let $f:\R\ra\R$ be the function $x\mapsto x^{n}$.
Then 
\[
\text{Im}(\Ff)=\begin{cases}
\FR, & \text{if }n\text{ is odd}\\
\FR^{\geq0}, & \text{if }n\text{ is even.}
\end{cases}
\]

\item Let $f:\R\ra\R$ be $x\mapsto a^{x}$ with $0<a<1$ or $a>1$. Then
$^{\bullet}(0,\infty)=\text{Im}(\Ff)$.
\item Let $f:(0,\infty)\ra\R$ be $x\mapsto\log_{a}x$ with $0<a<1$ or
$a>1$. Then $\text{Im}(\Ff)=\FR$.
\item Let $f:\R\ra\R$ be either $x\mapsto\sin x$ or $x\mapsto\cos x$.
Then $\text{Im}(\Ff)=[-1,1]$.
\item Let $f:\R\ra\R$ be either $x\mapsto\tan x$ or $x\mapsto\cot x$.
Then $\text{Im}(\Ff)=\FR$.
\end{enumerate}

\subsection{\label{sub:Monotonicity}Monotonicity}

Here are some important observations, with the last one another important
ingredient for solving the intermediate value property for Fermat
extension of ordinary smooth functions:
\begin{enumerate}
\item \label{enu:observation1}There is no smooth function $f:\R\ra\R$
such that there exists a point $a\in\R$ with the properties that
for every solution $b$ of the equation $f(x)=a$, the smallest $m\in\Z^{>0}$
such that $f^{(m)}(b)\neq0$ exists and is even, say it is $m_{b}$,
and all these $f^{(m_{b})}(b)$'s are not of the same sign. \\
To prove this, one observes that $f(x)=a$ can only have finitely
many solutions on any closed interval by the evenness assumption.
Clearly it is impossible to connect the image of $f$ if $f(x)=a$
has two consecutive solutions $b$ and $c$, say $b<c$, (i.e., there
is no solution in the interval $(b,c)$), with $f^{(m_{b})}(b)\cdot f^{(m_{c})}(c)<0$
and $m_{b},m_{c}$ both even.
\item \label{enu:Boundary}(\emph{Boundary}) Let $f:\R\ra\R$ be a smooth
function. If $b\in\text{Im}(f)$ is minimum, then $\text{Im}(\Ff)\cap(b+D_{\infty})$
is either $\{b\}$ or $b+D_{\infty}^{\geq0}$. Dually, if $c\in\text{Im}(f)$
is maximum, then $\text{Im}(\Ff)\cap(c+D_{\infty})$ is either $\{c\}$
or $c+D_{\infty}^{\leq0}$. \\
These follow easily from Taylor's expansion of $f$. This leads to
the monotonicity discussed below.
\item \label{enu:global-monotonicity}(\emph{Global monotonicity}) Let $U$
be an open subset of $\R$, and let $f:U\ra\R$ be a smooth function.
If $f'(x)>0$ for all $x\in U$, then $\Ff:\FU\ra\FR$ is strictly
increasing, i.e., if $x,y\in\FU$ with $x<y$, then $\Ff(x)<\Ff(y)$.
More generally, if $f$ has the property that for any $u\in U$, there
exists some $m=m(u)\in\Z^{>0}$ such that $f^{(m)}(u)\neq0$, $m_{u}:=$
the smallest such $m$ is odd, and $f^{(m_{u})}(u)>0$, then $\Ff$
is increasing, i.e., for any $x,y\in\FU$ with $x<y$, we have $\Ff(x)\leq\Ff(y)$.
\\
(A typical such example is $f:\R\ra\R$ given by $f(x)=x^{3}$.)\\
Dually, if $f'(x)<0$ for all $x\in U$, then $\Ff:\FU\ra\FR$ is
strictly decreasing, i.e., if $x,y\in\FU$ with $x<y$, then $\Ff(x)>\Ff(y)$.
More generally, if $f$ has the property that for any $u\in U$, there
exists some $m=m(u)\in\Z^{>0}$ such that $f^{(m)}(u)\neq0$, $m_{u}:=$
the smallest such $m$ is odd and $f^{(m_{u})}(u)<0$, then $\Ff$
is decreasing, i.e., for any $x,y\in\FU$ with $x<y$, we have $\Ff(x)\geq\Ff(y)$.\\
Let us sketch the proof of the statements in the first paragraph,
since the others can be proved similarly. It is easy to show that
under these assumptions, the function $f$ is increasing, i.e., if
$u,u'\in U$ with $u<u'$, then $f(u)<f(u')$. So we only need to
prove the slice version of the statements, which will be in the next
observation.
\item \label{enu:slice-monotonicity}(\emph{Slice monotonicity}) Let $U$
be an open subset of $\R$, let $a\in U$ be a fixed point, and let
$f:U\ra\R$ be a smooth function. Assume that there exists $n\in\Z^{>0}$
such that $f^{(n)}(a)\neq0$. Let $m$ be the smallest such $n$.\\
Assume that $m$ is odd. If $f^{(m)}(a)>0$ (resp. $f^{(m)}(a)<0$),
then $\Ff|_{a+D_{\infty}}:a+D_{\infty}\ra f(a)+D_{\infty}$ is increasing
(resp. decreasing). \\
Assume that $m$ is even. If $f^{(m)}(a)>0$ (resp. $f^{(m)}(a)<0$),
then $\Ff|_{a+D_{\infty}^{\geq0}}:a+D_{\infty}^{\geq0}\ra f(a)+\sgn(f^{(m)}(a))\cdot D_{\infty}^{\geq0}$
is increasing (resp. decreasing), and $\Ff|_{a+D_{\infty}^{\leq0}}:a+D_{\infty}^{\leq0}\ra f(a)+\sgn(f^{(m)}(a))\cdot D_{\infty}^{\geq0}$
is decreasing (resp. increasing).\\
We prove the case when $m$ is odd and $f^{(m)}(a)>0$, since the
others can be proved similarly. For any $x_{1},x_{2}\in a+D_{\infty}$
with $x_{1}<x_{2}$, write $x_{1}=a+\sum_{i=1}^{n}\alpha_{i}t^{a_{i}}$
and $x_{2}=a+\sum_{i=1}^{n}\beta_{i}t^{a_{i}}$ for their quasi-decompositions
with at least one of $\alpha_{1}$ and $\beta_{1}$ non-zero, and
assume that $k\in\{1,2,\ldots,n\}$ is the smallest integer such that
$\alpha_{k}<\beta_{k}$. Let $x=a+\sum_{i=1}^{k-1}\alpha_{i}t^{a_{i}}$.
Note that Taylor's expansions of both $\Ff(x_{1})$ and $\Ff(x_{2})$
have $\Ff(x)$ in common. And the leading terms of $\Ff(x_{1})-\Ff(x)$
and $\Ff(x_{2})-\Ff(x)$ are 
\[
\begin{cases}
\frac{f^{(m)}(a)}{(m-1)!}\alpha_{1}^{m-1}\alpha_{k}t^{(m-1)a_{1}+a_{k}}\text{ and }\frac{f^{(m)}(a)}{(m-1)!}\alpha_{1}^{m-1}\beta_{k}t^{(m-1)a_{1}+a_{k}}, & \text{if }k>1\\
\frac{f^{(m)}(a)}{m!}\alpha_{k}^{m}t^{ma_{k}}\text{ and }\frac{f^{(m)}(a)}{m!}\beta_{k}^{m}t^{ma_{k}}, & \text{if }k=1,
\end{cases}
\]
respectively. Since $m$ is odd and $\alpha_{k}<\beta_{k}$, in both
cases we have $\Ff(x_{1})\leq\Ff(x_{2})$, or more precisely, $\Ff(x_{1})<\Ff(x_{2})$
if the degree of that leading term in the above expression is less
than or equal to $1$, and $\Ff(x_{1})=\Ff(x_{2})$ if the degree
is greater than $1$.
\end{enumerate}
Here is an application to transferring monotonicity:
\begin{prop}
Let $f:U\ra\R$ be a smooth map from an open subset of $\R$. Then
$f$ is increasing (resp. decreasing) if and only if $\Ff$ is increasing
(resp. decreasing).\end{prop}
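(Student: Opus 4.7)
The ``if'' direction is immediate: the canonical inclusion $i_U:U\ra\FU$ is order preserving and $\Ff\circ i_U=f$, so monotonicity of $\Ff$ on $\FU$ restricts to monotonicity of $f$ on $U$. I would only write this in one line. The work is all in the ``only if'' direction, which I would prove by splitting according to standard parts.

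For the forward direction, assume $f$ is increasing. Take $x,y\in\FU$ with $x<y$ and write $x=\stx+\delta x$, $y=\sty+\delta y$. The first case is $\stx<\sty$. Here I would split further according as $f(\stx)<f(\sty)$ or $f(\stx)=f(\sty)$. If strict, the real gap $f(\sty)-f(\stx)>0$ dominates the infinitesimal corrections $\Ff(x)-f(\stx)$ and $\Ff(y)-f(\sty)$ (both in $D_{\infty}$), giving $\Ff(x)<\Ff(y)$. If equal, monotonicity of $f$ forces $f$ to be constant on $[\stx,\sty]$; since $f$ is smooth, an easy induction on $n$ using one-sided difference quotients shows $f^{(n)}(\stx)=f^{(n)}(\sty)=0$ for all $n\ge1$, so by Theorem~\ref{thm:slice image theorem}\eqref{enu:sliceFlatPoint} we get $\Ff(x)=f(\stx)=f(\sty)=\Ff(y)$.

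The second case is $\stx=\sty=:a$ with $\delta x<\delta y$. If $f$ is flat at $a$ then Theorem~\ref{thm:slice image theorem}\eqref{enu:sliceFlatPoint} again gives $\Ff(x)=\Ff(y)=f(a)$. Otherwise let $m$ be the smallest positive integer with $f^{(m)}(a)\neq0$. The crux is to argue that under the assumption that $f$ is increasing, $m$ must be odd and $f^{(m)}(a)>0$: if $m$ were even, Taylor's formula would give a strict local extremum at $a$, contradicting weak monotonicity on any neighborhood where $f$ is non-constant; if $m$ is odd with $f^{(m)}(a)<0$, $f$ would be strictly decreasing on a neighborhood of $a$. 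Slice monotonicity (Observation~\eqref{enu:slice-monotonicity} of Subsection~\ref{sub:Monotonicity}) then yields $\Ff(x)\le\Ff(y)$. The decreasing statement follows by applying the increasing case to $-f$.

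The main obstacle, to my mind, is the flat case in Subcase~1b: one has to upgrade the purely order-theoretic fact ``$f$ increasing and $f(\stx)=f(\sty)$'' to ``all derivatives of $f$ at $\stx$ and $\sty$ vanish'', which is what unlocks Theorem~\ref{thm:slice image theorem}\eqref{enu:sliceFlatPoint}. Once this is in place, every other case reduces cleanly either to comparing standard parts or to a direct appeal to slice monotonicity, with the sign-and-parity discussion of $m$ in Case~2 being the only substantive subtlety.
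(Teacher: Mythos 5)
Your proposal is correct and follows essentially the same route as the paper: the paper also reduces the ``only if'' direction to the behaviour on level sets $f^{-1}(a)$ (real gaps dominating otherwise), observes that a level set with nonempty interior consists of flat points while a singleton level set forces the leading nonvanishing derivative to have odd order and the right sign, and then invokes the slice image theorem and slice/global monotonicity exactly as you do. The only difference is organizational (the paper indexes the cases by level sets rather than by standard parts), not mathematical.
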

\begin{proof}
The ``if'' part is clear. For the ``only if'' part, we are left
to deal with monotonicity within $f^{-1}(a)$ for every fixed $a\in\R$.
Note that if $f^{-1}(a)\neq\emptyset$, since $f$ is monotone, $f^{-1}(a)$
is the intersection of a closed interval (possibly a single point)
in $\R$ with $U$. Now we have two cases: (1) if the interior of
$f^{-1}(a)$ is non-empty, then every point in $f^{-1}(a)$ is a flat
point of $f$; (2) if $f^{-1}(a)$ contains a single point, denoted
by $b$, then either $b$ is a flat point of $f$, or there exists
some $m\in\Z^{>0}$ such that $f^{(m)}(b)\neq0$ and the smallest
such $m$ is odd. So one can apply Observation \eqref{enu:global-monotonicity},
the global monotonicity, to conclude the result.
\end{proof}
Here are some more complicated examples, which are complimentary to
Observation \eqref{enu:observation1} above:
\begin{example}
\ \label{exa:complicatedegs}\end{example}
\begin{enumerate}
\item Let $f:\R\ra\R$ be
\[
f(x)=\begin{cases}
x(x-1)^{2}e^{-1/x^{2}}, & \text{if }x\neq0\\
0, & \text{else}.
\end{cases}
\]
Then $f$ is smooth and surjective, $f(x)=0$ has two solutions $x=0$
and $x=1$, $f^{(n)}(0)=0$ for all $n\in\N$, the smallest $m\in\N$
such that $f^{(m)}(1)\neq0$ is $2$, and $f''(1)>0$. Therefore,
$0$ is in the interior of $\text{Im(f)}$, and $\text{Im}(\Ff)\cap D_{\infty}=D_{\infty}^{\geq0}$.
This example shows that if the ordinary smooth function has a flat
point, then it is possible that the image of its Fermat extension
has ``holes'', but not always (see the following one).
\item Let $f:\R\ra\R$ be
\[
f(x)=\begin{cases}
x(x-1)^{2}(x+1)^{2}e^{-1/x^{2}}, & \text{if }x\neq0\\
0, & \text{else}.
\end{cases}
\]
Then $f$ is smooth and surjective, $f(x)=0$ has three solutions
$x=0$, $x=1$ and $x=-1$, $f^{(n)}(0)=0$ for all $n\in\N$, the
smallest $m\in\N$ such that $f^{(m)}(\pm1)=0$ are both $2$, $f''(1)>0$
and $f''(-1)<0$. Therefore, $0$ is in the interior of $\text{Im}(f)$,
and $\text{Im}(\Ff)\cap D_{\infty}=D_{\infty}$.
\end{enumerate}

\subsection{Fermat extension of ordinary smooth functions}

Now we can prove the intermediate value property for certain Fermat
extension of ordinary smooth functions:
\begin{cor}
\label{cor:intermediate for Fermat extension}Let $U$ be an open
connected subset of $\R$, and let $f:U\ra\R$ be a smooth function
without any flat point, i.e., for any $u\in U$, there exists $m=m(u)\in\Z^{>0}$
such that $f^{(m)}(u)\neq0$. Then $\Ff$ has the intermediate value
property. \end{cor}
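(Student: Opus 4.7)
The plan is to reduce the claim to a classical intermediate value argument on $\R$ together with a slice-level solvability analysis provided by the slice image theorem and slice monotonicity. Write $a = \sta + \delta a$ and $b = \stb + \delta b$; without loss of generality $\Ff(a) \le y \le \Ff(b)$, so taking standard parts gives $f(\sta) \le \sty \le f(\stb)$, with $\sta, \stb \in U$.

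First I would dispose of the case $\sta = \stb$. Here everything is confined to the single slice $\sta + D_\infty$, and the slice image theorem (Theorem~\ref{thm:slice image theorem}) applied at $\sta$, together with slice monotonicity (Observation~\eqref{enu:slice-monotonicity}), directly produces the desired $c \in [a,b]$ with $\Ff(c) = y$.

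For $\sta < \stb$, the classical intermediate value theorem applied to the continuous function $f$ on the real interval $[\sta, \stb]_\R \subseteq U$ yields a nonempty set $Z := f^{-1}(\sty) \cap [\sta, \stb]_\R$. If some $c_0 \in Z \cap (\sta, \stb)_\R$ has odd minimal order $m = m(c_0)$ of nonvanishing derivative, then case (2a) of the slice image theorem gives $\Ff(c_0 + D_\infty) = \sty + D_\infty$; since $y - \sty \in D_\infty$ we pick $c \in c_0 + D_\infty$ with $\Ff(c) = y$, and the strict inequality $\sta < {^\circ c} < \stb$ of standard parts forces $a < c < b$ automatically.

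The main obstacle is the remaining case in which every element of $Z$ has even $m$. Observation~\eqref{enu:observation1} forces all the corresponding $f^{(m_{c_0})}(c_0)$ to share a single sign; assume WLOG they are positive, so every $c_0 \in Z$ is a strict local minimum of $f$. A short continuity argument rules out any sign change of $f - \sty$ on $[\sta, \stb]_\R$ (a crossing would lie in $Z$ but fail to be a local minimum), so $f \ge \sty$ on all of $[\sta, \stb]_\R$, forcing $f(\sta) = \sty$ and $\sta \in Z$. Case (2b) of the slice image theorem at $\sta$ then gives $\Ff(\sta + D_\infty) = \sty + D_\infty^{\ge 0}$, and since $y - \sty \ge \Ff(a) - \sty \ge 0$ we find $c \in \sta + D_\infty^{\ge 0}$ with $\Ff(c) = y$. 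The delicate step is verifying $a \le c \le b$: the upper bound $c \le b$ is automatic from ${^\circ c} = \sta < \stb$, while slice monotonicity shows that the solution set $\{c \in \sta + D_\infty^{\ge 0} : \Ff(c) = y\}$ is convex in the Fermat order, so the hypothesis $\Ff(a) \le y$ forces some element of this set to dominate $a$ (splitting, if needed, on whether $\delta a \le 0$ or $\delta a > 0$). The symmetric subcase of local maxima swaps the roles of $\sta$ and $\stb$ and uses $D_\infty^{\le 0}$ in place of $D_\infty^{\ge 0}$.
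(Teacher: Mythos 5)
Your overall strategy --- classical IVT on standard parts, then the slice image theorem plus slice monotonicity to solve within a slice and to control the position of the solution --- is exactly the paper's, and your treatment of the two cases you do consider (an interior point of $Z$ of odd order; all of $Z$ of even order) is essentially sound, matching the paper's Case 1 up to a cosmetic difference (you rule out opposite signs among even-order points via Observation~\eqref{enu:observation1}, while the paper simply exploits opposite signs when they occur, since the two slice images then cover $\sty+D_{\infty}$). However, your case analysis is not exhaustive, and the missing configuration is precisely the one needing extra care. Nothing prevents $Z\cap(\sta,\stb)_{\R}$ from being empty while $Z$ contains an endpoint of \emph{odd} order: then neither your first case (which demands an interior odd-order point) nor your second (which demands that \emph{every} element of $Z$ have even order) applies. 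Concretely, take $f(x)=x^{2}-x$, $a=-t^{1/2}$, $b=1+t^{1/3}$, $y=t^{2/5}$. Here $\Ff(a)=t^{1/2}+t<y<t^{1/3}+t^{2/3}=\Ff(b)$, $\sty=0$, and $Z=\{0,1\}$ with both points of order $1$ and $Z\cap(0,1)_{\R}=\emptyset$. Worse, the unique solution of $\Ff(c)=y$ with $^{\circ}c=0$ is $c=-t^{2/5}+t^{4/5}<a$, because $\Ff$ is \emph{decreasing} on the slice $0+D_{\infty}$ (as $f'(0)=-1<0$); the valid solution $c=1+t^{2/5}-t^{4/5}\in(a,b)$ lives in the slice at $\stb$, which your argument never visits in an odd-order setting.

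This is the content of the paper's Cases 2 and 3: when $\sty$ equals $f(\sta)$ or $f(\stb)$ and the relevant minimal derivative at that endpoint has the ``favorable'' sign, slice monotonicity at that endpoint closes the argument; when it has the unfavorable sign (e.g.\ $f^{(m)}(\sta)<0$ with $m$ odd, so $f$ decreases just to the right of $\sta$), one perturbs inward to a point $u\in(\sta,\sta+\delta)_{\R}$ with $f(u)<\sty$ and thereby manufactures a genuine interior crossing of odd order, reducing to the interior case. You need to add this branch (or, equivalently, symmetrize your boundary analysis so that it also inspects the slices at $\sta$ and $\stb$ when they carry odd-order zeros of $f-\sty$); as it stands the proof does not cover all inputs $a,b,y$.
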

\begin{proof}
Let $a,b\in\FU$ with $a<b$. If $\Ff(a)=\Ff(b)$, then we are done.
Without loss of generality, we may assume that $\Ff(a)<\Ff(b)$. For
any $x\in\FR$ with $\Ff(a)<x<\Ff(b)$, we need to find $c\in\FR$
with $a<c<b$ (since $U$ is connected) such that $\Ff(c)=x$. We
prove this in several cases below.

Case 1: Assume $f(\sta)<\stx<f(\stb)$. Then by the classical intermediate
value theorem for $f$, the set $f^{-1}(\stx)\cap(\sta,\stb)_{\R}$
is non-empty. Now if there exists some $c_{0}$ in this set with the
property that the smallest positive integer $m$ such that $f^{(m)}(c_{0})\neq0$
is odd, or if there exist $c_{1},c_{2}$ in this set with the property
that the smallest positive integer $m_{i}$ such that $f^{(m_{i})}(c_{i})\neq0$
are both even and $f^{(m_{1})}(c_{1})f^{(m_{2})}(c_{2})<0$, then
we are done by Theorem~\ref{thm:slice image theorem}. So we may
assume that for every point $c_{0}$ in this set, the smallest positive
integer $m(c_{0})$ such that $f^{(m(c_{0}))}(c_{0})\neq0$ is even,
and all these $f^{(m(c_{0}))}(c_{0})$'s are of the same sign. Without
loss of generality, we may assume that they are all positive. This
implies that $\stx$ has to be the minimum of the smooth function
$f|_{(\sta,\stb)_{\R}}:(\sta,\stb)_{\R}\ra\R$, contradicting the
assumption that $f(\sta)<\stx<f(\stb)$ at the beginning of this case.
(Actually in this case, one can conclude together with Observation~\eqref{enu:observation1}
that there always exists $c_{0}\in f^{-1}(\stx)\cap(\sta,\stb)_{\R}$
with the property that the smallest positive integer $m$ such that
$f^{(m)}(c_{0})\neq0$ is odd.)

Case 2: Assume that $f(\sta)<f(\stb)$ and $\stx$ is equal to one
of them. Without loss of generality, we may assume $f(\sta)=\stx<f(\stb)$.
If $m$ is the smallest positive integer such that $f^{(m)}(\sta)\neq0$,
and $f^{(m)}(\sta)>0$, then we are done by Theorem~\ref{thm:slice image theorem}
together with slice monotonicity (Observation~\eqref{enu:slice-monotonicity}).
If $f^{(m)}(\sta)<0$, then $f$ is decreasing on $(\sta,\sta+\delta)_{\R}$
for some $\delta\in\R^{>0}$, and the claim then follows from Case
1.

Case 3: Assume that $f(\sta)=\stx=f(\stb)$. We may further assume
that $f^{(m)}(\sta)<0$, where $m$ is the smallest positive integer
such that $f^{(m)}(\sta)\neq0$, and $f^{(n)}(\stb)>0$ if $n$ is
even or $f^{(n)}(\stb)<0$ if $n$ is odd, where $n$ is the smallest
positive integer such that $f^{(n)}(\stb)\neq0$, since otherwise
the claim is true by slice monotonicity and Theorem~\ref{thm:slice image theorem}.
Under these assumptions, $f$ is decreasing near both $a$ and $b$,
and the claim then follows from Case 1.
\end{proof}
However, the converse of the above corollary is not true:
\begin{example}
\label{exa:flat-ivt}It is not true that if a smooth function $f:\R\ra\R$
has a flat point, then $\Ff$ does not have the intermediate value
property. For example, let $f$ be defined by 
\[
f(x)=\begin{cases}
e^{-1/x^{2}}\cos(1/x), & \text{if }x\neq0\\
0, & \text{else}.
\end{cases}
\]
Then one can check that (1) $f$ is a smooth function; (2) $x=0$
is the only flat point of $f$. The ``only'' part of the second
statement follows from the fact that the system of equations
\[
\begin{cases}
f'(x)=0\\
f''(x)=0
\end{cases}
\]
has only one solution: $x=0$. Note that $f^{-1}(0)=\{0,x_{k}\}_{k\in\Z}$
with $x_{k}=\frac{1}{k\pi+\frac{\pi}{2}}$, and $f'(x_{k})\neq0$
for all $k$. Together with Corollary~\ref{cor:intermediate for Fermat extension}
and the evenness of $f$, one can show that $f$ has the intermediate
value property.
\end{example}
Here is an application to extrema problems:
\begin{prop}
Let $f:\R\ra\R$ be a smooth map, and let $a,b\in\FR$ with $a<b$.
Then $\Ff|_{[a,b]}$ always has maximum and minimum. Moreover, if
$f$ has no flat point, then there exist $c,d\in\FR$ with $c<d$
such that $\Ff([a,b])=[c,d]$.\end{prop}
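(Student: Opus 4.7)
The plan is to establish the two assertions separately, leveraging Theorem~\ref{thm:slice image theorem} (slice image), the slice monotonicity observation, and Corollary~\ref{cor:intermediate for Fermat extension}. For any $x\in[a,b]$ one has $\sta\leq\stx\leq\stb$ and $\Ff(x)=f(\stx)+\epsilon_x$ with $\epsilon_x$ infinitesimal, so the standard part of $\Ff(x)$ equals $f(\stx)$. By classical compactness, $M_{\R}:=\max_{y\in[\sta,\stb]_\R}f(y)$ exists and is attained at some $y_0\in[\sta,\stb]_\R$.

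For existence of the maximum I split into two cases. \emph{Case (i):} some $y_0$ realizing $M_{\R}$ lies in $(\sta,\stb)_\R$, or $y_0=\sta$ with $a=\sta$, or $y_0=\stb$ with $b=\stb$. Then $y_0\in[a,b]$ and $\Ff(y_0)=M_{\R}$. For any other $x\in[a,b]$ with $f(\stx)<M_{\R}$, $\Ff(x)<M_{\R}$ since a positive real gap beats any infinitesimal; for $x$ with $f(\stx)=M_{\R}$, Theorem~\ref{thm:slice image theorem} (flat case, or even $m$ with $f^{(m)}(\stx)<0$---the only possibilities at a local max) together with slice monotonicity forces $\Ff(x)\leq M_{\R}$. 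So the max equals $M_{\R}$. \emph{Case (ii):} every $y_0$ realizing $M_{\R}$ lies in $\{\sta,\stb\}\setminus[a,b]$; without loss of generality $y_0=\sta$ only, with $a>\sta$. Then $f(y)<M_{\R}$ for all $y\in(\sta,\stb]_\R$, and if $m$ is the least index with $f^{(m)}(\sta)\neq 0$ (when one exists), the endpoint-max sign condition forces $f^{(m)}(\sta)<0$. On the restricted slice $(\sta+D_{\infty})\cap[a,b]=\{\sta+\delta:\delta\in D_{\infty},\;\delta\geq\delta a\}$, where $\delta a:=a-\sta>0$, the flat case gives $\Ff$ constantly equal to $M_{\R}$; odd $m$ gives via slice monotonicity that $\Ff$ is strictly decreasing on the whole slice $\sta+D_{\infty}$; even $m$ gives that $\Ff$ is decreasing on $\sta+D_{\infty}^{\geq 0}$, which contains the restricted slice. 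In every non-flat subcase the max on the restricted slice is attained at $\delta=\delta a$, i.e., at $a$; the remaining slices contribute only smaller standard parts, so the global maximum exists. The case $\sta=\stb$ reduces to a pure slice problem, handled identically, and the case where both $\sta,\stb$ are outside-$[a,b]$ boundary maxima is settled by comparing the two slice maxima. The minimum is symmetric.

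For the interval identification, let $d=\Ff(q)$ and $c=\Ff(p)$ be the maximum and minimum attained at some $p,q\in[a,b]$; without loss of generality $p\leq q$. Since $f:\R\to\R$ is smooth without flat points and $\R$ is open and connected, Corollary~\ref{cor:intermediate for Fermat extension} tells us $\Ff$ has the intermediate value property. Applied on $[p,q]\subseteq[a,b]$ to the values $\Ff(p)=c\leq d=\Ff(q)$, every $y\in[c,d]$ equals $\Ff(r)$ for some $r\in[p,q]\subseteq[a,b]$; combined with the reverse inclusion $\Ff([a,b])\subseteq[c,d]$, we conclude $\Ff([a,b])=[c,d]$.

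The main obstacle is Case (ii) above: pinpointing the maximum on the constrained slice when the global standard-part max of $f$ lies only at a boundary endpoint of $[\sta,\stb]_\R$ that itself is not in $[a,b]$. Handling this cleanly demands the full parity-and-sign casework on the first non-vanishing derivative of $f$ at $\sta$ via Theorem~\ref{thm:slice image theorem} and slice monotonicity, and a cross-slice comparison to rule out competitors with smaller standard parts.
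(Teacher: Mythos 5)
Your overall strategy for the first statement --- reduce to the classical extreme value theorem for $f|_{[\sta,\stb]_{\R}}$ and then control the fibre over each standard part by the slice image theorem and slice monotonicity --- is the same as the paper's, and your derivation of the second statement from Corollary~\ref{cor:intermediate for Fermat extension} is verbatim the paper's argument. However, your case analysis for the maximum is not exhaustive, and the bound you assert in Case~(i) is false in exactly the configurations that fall through the cracks. The overlooked configuration is $\delta a<0$ (dually $\delta b>0$): then $[a,b]$ contains the points $\sta+\delta$ with $\delta a\leq\delta<0$, lying infinitesimally to the \emph{left} of $\sta$, and there $\Ff$ can exceed $M_{\R}$. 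Concretely, take $f(x)=-x$, $a=-t$, $b=1$: the maximum $M_{\R}=0$ of $f$ on $[0,1]_{\R}$ is attained only at $y_{0}=0=\sta$, so you are not in Case~(i) (since $a=-t\neq\sta$) and not in Case~(ii) either (since $\sta=0\in[a,b]$); the true maximum is $\Ff(-t)=t>0=M_{\R}$. Worse, Case~(i) can apply and still give the wrong answer: if $f$ has a maximizer in $(\sta,\stb)_{\R}$ and \emph{also} $f(\sta)=M_{\R}$ with $f'(\sta)<0$ and $a<\sta$, then Case~(i) applies, yet $\Ff(a)=M_{\R}+f'(\sta)\,\delta a+\cdots>M_{\R}$ because $f'(\sta)\,\delta a>0$. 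The culprit is your parenthetical claim that flatness or even $m$ with $f^{(m)}(\stx)<0$ are ``the only possibilities at a local max'': at the real endpoints $\sta,\stb$ the extremum of $f|_{[\sta,\stb]_{\R}}$ is only one-sided, so odd $m$ is perfectly possible there, and once the slice is allowed to extend past the real endpoint, slice monotonicity pushes $\Ff$ strictly above $M_{\R}$.

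The repair is precisely the boundary analysis the paper's proof alludes to: treat the two slices $(\sta+D_{\infty})\cap[a,b]$ and $(\stb+D_{\infty})\cap[a,b]$ separately, in all parity/sign subcases and for either sign of $\delta a$ and $\delta b$. Slice monotonicity shows the supremum of $\Ff$ on each such slice is attained at one of the finitely many candidates $a$, $\sta$, $\stb$, $b$ (whichever lie in $[a,b]$); the global maximum is then the largest of these values and $M_{\R}$, the latter being attained whenever $f$ has a maximizer in $(\sta,\stb)_{\R}$. With that correction your Case~(ii) and the intermediate value argument for $\Ff([a,b])=[c,d]$ go through as written.
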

\begin{proof}
The first statement follows easily from the extreme value property
of $f|_{[\sta,\stb]_{\R}}$ together with Observation~\eqref{enu:Boundary}
(boundary) and Observation~\eqref{enu:slice-monotonicity} (slice
monotonicity), and the second statement then follows from Corollary~\ref{cor:intermediate for Fermat extension},
the intermediate value property for $\Ff$.
\end{proof}

\subsection{\label{sub:IV-for-quasi}Quasi-standard smooth functions}

Now we turn to the intermediate value property problem for a general
quasi-standard smooth function $g:\FU\ra\FR$, where $U$ is an open
connected subset of $\R$. For $a,b\in\FU$ with $a<b$, if there
exist $c_{1},\ldots,c_{n}\in\FU$ with $a<c_{1}<\ldots<c_{n}<b$ for
some $n\in\N$ such that $g$ has the intermediate value property
on each of $[a,c_{1}],[c_{1},c_{2}],\ldots,[c_{n},b]$, then it is
easy to see that $g$ also has the intermediate value property on
$[a,b]$. In other words, if each local expression of $g$ has the
intermediate value property, then so does $g$. So we are led to study
when the function $\Fh(v,\blank):\FU\ra\FR$ has the intermediate
value property, where $h:V\times U\ra\R$ is a smooth function with
$V$ an open subset of some Euclidean space, and $v\in\FV$ is a fixed
point. It is slightly more complicated than the Fermat extension $\Ff:\FU\ra\FR$
we have discussed in the previous subsection, because of the parameter
$v$. For example, let $g:\FR\ra\FR$ be defined by $g(y)=\Fh(t^{1/100},y)$,
where $h:\R\times\R\ra\R$ is a smooth function defined by $h(x,y)=y^{3}+xy^{2}$.
Then $g|_{D_{\infty}}:D_{\infty}\ra D_{\infty}$ is not surjective
since the equation $g(y)=-t^{51/100}$ has no solution. But $g(0)=0$
and $g(-1)=-1+t^{1/100}$, which implies that $g$ does not have the
intermediate value property over $\FR$. The main problem here is
$\deg_{y}(y^{3})>\deg_{y}(xy^{2})$, or in other words, it is not
determined whether $y^{3}$ or $xy^{2}$ will have the leading term
for variant $y$. After excluding such functions, we can prove the
following general criteria:
\begin{prop}
\label{prop:intermediate}Let $U$ be an open connected subset of
$\R$, and let $g:\FU\ra\FR$ be a quasi-standard smooth function
defined by $g(x)=\Fh(v,x)$, where $h:V\times U\ra\R$ is a smooth
function, $V$ is an open subset of some Euclidean space, and $v=\stv+\delta v\in\FV$
is a fixed point with $\delta v\neq0$. Assume that for each $x\in\FU$
there exists $m=m(x)\in\Z^{>0}$ such that $D^{(0,m)}h(\stv,\stx)\neq0$.
Write $m_{x}(=m_{\stx})$ for the smallest such $m$. If $D^{a}h(\stv,\stx)=0$
for any multi-index $a=(a_{v},a_{x})$ with $0<a_{x}<m_{x}$ for every
$x\in\FU$, then $g$ has the intermediate value property.
\end{prop}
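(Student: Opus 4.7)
Fix $a,b\in\FU$ with $a<b$ and $y\in\FR$ with $g(a)<y<g(b)$; I will produce $c\in\FR$ with $a<c<b$ and $g(c)=y$. Note first that since every $u\in U$ arises as $\stx$ for $x=u\in\FU$, the hypothesis guarantees that the standard-part function $f_{0}:U\to\R$ defined by $f_{0}(u):=h(\stv,u)$ has no flat points (because $f_{0}^{(m)}(u)=D^{(0,m)}h(\stv,u)$), and the integer $m_{u}$ from the proposition coincides with the least positive $m$ with $f_{0}^{(m)}(u)\neq0$. Since the standard parts of $g(a)$ and $g(b)$ are $f_{0}(\sta)$ and $f_{0}(\stb)$, we are placed in the same real-analytic situation as in the proof of Corollary~\ref{cor:intermediate for Fermat extension}, with $f_{0}(\sta)\leq\sty\leq f_{0}(\stb)$.

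For any $u\in U$ and $\delta x\in D_{\infty}$ with $u+\delta x\in\FU$, Taylor's formula applied to $h$ at $(\stv,u)$ yields the finite sum
\[
g(u+\delta x)=\Fh(v,u+\delta x)=g(u)+\sum_{j\geq1}c_{j}(v)\cdot(\delta x)^{j},\quad c_{j}(v):=\tfrac{1}{j!}\cdot{^{\bullet}(D^{(0,j)}h(\blank,u))}(v),
\]
and the vanishing hypothesis collapses every summand with $0<j<m_{u}$, reducing the expansion to
\[
g(u+\delta x)=g(u)+\sum_{j\geq m_{u}}c_{j}(v)\cdot(\delta x)^{j},
\]
where $c_{m_{u}}(v)$ has nonzero standard part $\tfrac{1}{m_{u}!}f_{0}^{(m_{u})}(u)$ and hence is invertible in $\FR$. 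This reduced expansion has the same shape as the Taylor expansion driving the proof of Theorem~\ref{thm:slice image theorem}, the single change being that the real coefficient $f^{(m)}(a)/m!$ is replaced by the invertible element $c_{m_{u}}(v)\in\FR$. The recursive algorithm of Theorem~\ref{thm:slice image theorem} therefore transcribes verbatim: at each step one solves an equation of the form $c_{m_{u}}(v)\cdot r=s$ (solvable by invertibility of $c_{m_{u}}(v)$) and extracts an $m_{u}$-th root, the degree bookkeeping for leading and second-leading terms depends only on the standard part of the leading coefficient, and the finite-termination argument carries over unchanged. This gives
\[
g(u+D_{\infty})=\begin{cases}g(u)+D_{\infty} & \text{if }m_{u}\text{ is odd},\\ g(u)+\sgn(f_{0}^{(m_{u})}(u))\cdot D_{\infty}^{\geq0} & \text{if }m_{u}\text{ is even},\end{cases}
\]
together with the analogous slice monotonicity statement (Observation~\eqref{enu:slice-monotonicity}), in which $\sgn f^{(m)}(a)$ is replaced by $\sgn f_{0}^{(m_{u})}(u)$.

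With these slice-level tools in hand, the three-case dissection of Corollary~\ref{cor:intermediate for Fermat extension} applied to $f_{0}$ runs essentially word for word. In Case~1, if $f_{0}(\sta)<\sty<f_{0}(\stb)$, the classical intermediate value theorem produces $c_{0}\in(\sta,\stb)_{\R}$ with $f_{0}(c_{0})=\sty$; the slice image at $c_{0}$ combined with Observation~\eqref{enu:observation1} (which rules out the parity obstruction if all such $c_{0}$ had even $m_{c_{0}}$ with same-sign leading derivative) yields some $c\in c_{0}+D_{\infty}$ with $g(c)=y$. In Case~2, if $\sty$ equals one of $f_{0}(\sta),f_{0}(\stb)$, slice monotonicity at the relevant endpoint either finishes directly or reduces the problem to Case~1 via monotonicity of $f_{0}$ near that endpoint. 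In Case~3, if $f_{0}(\sta)=\sty=f_{0}(\stb)$, applying slice monotonicity at each of $a,b$ again reduces to the preceding cases. The only substantive technical point is verifying that the algorithm in the proof of Theorem~\ref{thm:slice image theorem} survives the replacement of a real leading coefficient by a merely invertible element of $\FR$; since the algorithm's real (as opposed to symbolic) computations depend only on the nonzero standard part of $c_{m_{u}}(v)$, this verification is mechanical, and the proof is complete.
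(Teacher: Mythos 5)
Your proposal is correct and takes essentially the same approach as the paper: reduce to the standard-part function $h(\stv,\blank)$, note that the vanishing hypothesis forces $\frac{D^{(0,m_{x})}h(\stv,\stx)}{m_{x}!}(\delta x)^{m_{x}}$ to supply the leading term so that the slice image theorem and slice monotonicity carry over, and then rerun the three-case analysis of Corollary~\ref{cor:intermediate for Fermat extension}. The paper's own proof is an equally brief sketch of exactly these steps, so no comparison beyond this is needed.
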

Note that Corollary~\ref{cor:intermediate for Fermat extension}
applies to the case when $\delta v=0$.
\begin{proof}
The proof is very similar to that of Corollary~\ref{cor:intermediate for Fermat extension},
so we only sketch here.

The assumptions in the statement imply that if we omit all constant
terms in $g(x)=\Fh(v,x)$, i.e., if we consider $g(x)-g(\stx)$, then
the term $\frac{D^{(0,m_{x})}h(\stv,\stx)}{m_{x}!}x^{m_{x}}$ will
always provide the leading term when varying $x$.

Step 1: One can prove the slice image theorem in this case: Under
these assumptions,

(1) if $m_{x}$ is odd, then $g|_{\stx+D_{\infty}}:\stx+D_{\infty}\ra g(\stx)+D_{\infty}$
is surjective;

(2) if $m_{x}$ is even, then $g|_{\stx+D_{\infty}}:\stx+D_{\infty}\ra g(\stx)+\sgn(D^{(0,m_{x})}h(\stv,\stx))\cdot D_{\infty}^{\geq0}$
is surjective, or in the refined version, 
\[
g(\stx+D_{\infty}^{\geq0})=g(\stx)+\sgn(D^{(0,m_{x})}h(\stv,\stx))\cdot D_{\infty}^{\geq0}=g(\stx+D_{\infty}^{\leq0}).
\]

Step 2: One can prove the slice monotonicity in this case: Under these
assumptions,

(1) if $m_{x}$ is odd and $D^{(0,m_{x})}h(\stv,\stx)>0$ (resp. $D^{(0,m_{x})}h(\stv,\stx)<0$),
then $g|_{\stx+D_{\infty}}$ is increasing (resp. decreasing);

(2) if $m_{x}$ is even and $D^{(0,m_{x})}h(\stv,\stx)>0$ (resp.
$D^{(0,m_{x})}h(\stv,\stx)<0$), then $g|_{\stx+D_{\infty}^{\geq0}}$
is increasing (resp. decreasing) and $g|_{\stx+D_{\infty}^{\leq0}}$
is decreasing (resp. increasing).

Step 3: We can split into three cases depending on the order of the
real parts, and prove the intermediate value property as the proof
of Corollary~\ref{cor:intermediate for Fermat extension}.
\end{proof}
Here is a way to apply this proposition:
\begin{example}
Let $h:\R^{l}\times\R\ra\R$ be a smooth function with variables $(x,y)\in\R^{l}\times\R$,
and let $a\in\R^{l}$ be a fixed point. Assume that $h$ is a polynomial
in $y$ together with the property that there exists some $m\in\Z^{>0}$
such that $\frac{\partial^{m}h}{\partial y^{m}}(a,0)\neq0$. Then
there exist finitely many connected Fermat open subsets $A_{i}$ of
$\FR$ such that on each $A_{i}$ the smooth function $\Fh(v,\blank)$,
for any fixed $v\in a+D_{\infty}^{l}$, has the intermediate value
property. 

For example, let $h(x,y)=y^{3}+xy^{2}$ and let $v=t^{1/100}$. We
know from the paragraph above Proposition~\ref{prop:intermediate}
that $\Fh(v,\blank)$ does not have the intermediate value property
over $\FR$. Here is the procedure to find all connected Fermat open
subsets of $\FR$ on each of which $\Fh(v,\blank)$ has the intermediate
value property. Note that Taylor's expansion of $h$ at $(0,c)$ is
given by
\begin{align*}
h(v,c+y) & =(c+y)^{3}+(c+y)^{2}v\\
 & =c^{3}+c^{2}v+3c^{2}y+2cyv+3cy^{2}+y^{2}v+y^{3}.
\end{align*}
 According to Proposition~\ref{prop:intermediate}, $\Fh(v,\blank)$
has the intermediate value property on each connected Fermat open
subset as long as the subset does not contain $c$ with $3c^{2}=0$,
i.e., $c=0$. So we get such connected Fermat open subsets: $^{\bullet}(-\infty,0)$
and $^{\bullet}(0,\infty)$.\end{example}


\begin{thebibliography}{G1}
\bibitem[G1]{G1} \textsc{P. Giordano}, \emph{Fermat reals: nilpotent
infinitesimals and infinite dimensional spaces}, preprint, available
at \texttt{http://arxiv.org/abs/0907.1872}.

\bibitem[G2]{G2} \textsc{P. Giordano}, \emph{The ring of Fermat reals},
Adv. Math. 225(4) (2010), pp. 2050-2075.

\bibitem[G3]{G3} \textsc{P. Giordano}, \emph{Fermat-Reyes method
in the ring of Fermat reals}, Adv. Math. 228(2) (2011), pp. 862-893.

\bibitem[GK]{GK} \textsc{P. Giordano and M. Kunzinger}, \emph{Topological
and algebraic structures on the ring of Fermat reals}, Israel J. Math.
193(1) (2013), pp. 459-505.

\bibitem[GW]{GW} \textsc{P. Giordano and E. Wu}, \emph{Calculus in
the ring of Fermat reals, Part I: Integral calculus}, Adv. Math. 289
(2016), pp. 888-927.

\bibitem[K]{K} A. Kock, \emph{Synthetic differential geometry}, Second
Edition, London Mathematical Society Lecture Note Series 333, Cambridge
University Press, 2006.

\bibitem[R]{R} A. Robinson, \emph{Non-standard analysis}, Revised
Edition, Princeton Landmarks in Mathematics, Princeton University
Press, 1996.

\bibitem[W]{W} \textsc{E. Wu}, \emph{The Fermat functor}s\emph{,
Part I: The theory}, in preparation.\end{thebibliography}
\end{document}